\def\red{} %\textcolor{red}}
\def\redd{} %\textcolor{red}}
\def\blue{} %\textcolor{blue}}
\def\TC{\protect\operatorname{TC}}
\def\cd{\protect\operatorname{cd}}
\def\cat{\protect\operatorname{cat}}
\def\hdim{\protect\operatorname{hdim}}
\def\max{\protect\operatorname{max}}
\newtheorem{proposition}{Proposition}[section]
\newtheorem{definition}[proposition]{Definition}
\newtheorem{theo}[proposition]{Theorem}
\newtheorem{remark}[proposition]{Remark}
\newtheorem{lema}[proposition]{Lemma}
\newtheorem{corollary}[proposition]{Corollary}
\title{\red{Pairwise disjoint maximal cliques in random graphs and sequential motion planning on random right angled Artin groups}}
\author{Jes\'us Gonz\'alez\thanks{\red{Partially supported by Conacyt Research Grant 221221.}}, B\'arbara Guti\'errez, and Hugo Mas}
\date{\today}
\begin{document}

\maketitle

\begin{abstract}
\red{The clique number of a random graph in the Erd\"os-R\'enyi model $\mathcal{G}(n,p)$ yields a random variable which is known to be asymptotically (as $n$ tends to infinity) almost surely within one of an explicit logarithmic (on $n$) function $r(n,p)$. We extend this fact by showing that random graphs have, asymptotically almost surely, arbitrarily many pairwise disjoint complete subgraphs with as many vertices as $r(n,p)$. The result is motivated by and applied to the sequential motion planning problem on random right angled Artin groups. Indeed, we give an asymptotical description of all the higher topological complexities of Eilenberg-MacLane spaces associated to random graph groups.}
%We study the behavior of the higher topological complexity for a random model of Eilenberg Mac-Lane spaces of type $(G_{\gamma}, 1)$, where $G_{\gamma}$ is a right angled Artin group.
\end{abstract}

\medskip
\noindent{\red{{\it 2010 Mathematics Subject Classification}: 60C05, 05C80, 20F36, 52B70, 55M30, 55U10, 68T40.}}

\noindent{\red{{\it Keywords and phrases:} Random graph, Erd\"os-Renyi model, maximal cliques, Schwarz genus, Sequential motion planning, polyhedral product, right angled Artin group.}}

\tableofcontents

\section{Introduction}\label{secintro}
\red{For a positive integer $n$ and probability parameter $p$, $0<p<1$, consider the Erd\"os-R\'enyi model $\mathcal{G}(n,p)$ of random graphs $\Gamma$ in which each edge of the complete graph on the $n$ vertices $[n]=\{1,2,\ldots, n\}$ is included in $\Gamma$ with probability $p$ independently of all other edges. In other words, the random variables $e_{ij}$, $1\leq i<j\leq n$, defined by
$$
e_{i,j}(\Gamma)=\begin{cases}
1, & \mbox{if $(i,j)$ is an edge in $\Gamma$;}\\
0, & \mbox{otherwise,}
\end{cases}
$$
are independent and have $P(e_{i,j}=1)=p$. In this context, the clique random variable $C=C_{n,p}$, $$C(\Gamma)=\max\{r\in\mathbb{N}\;\colon \Gamma\mbox{ admits a complete subgraph with $r$ vertices}\},$$ has been the subject of intensive research since the 1970's. Matula provided in~\cite{Matula1} numerical evidence suggesting that $C$ has a very peaked density around $2\log_{q}n$ where $q=1/p$. Such a property was established in~\cite{gridia} by Grimmett and McDiarmid who proved that, as $n\to\infty$, $$\frac{C}{\log_qn}\to2.$$ A much finer result, Theorem~\ref{randomcat} below, was proved by Matula. Here and throughout the paper $\lfloor x\rfloor$ stands for the integral part of the real number $x$, and we set $$z=z(n,p)=2\log_qn-2\log_q\log_qn+2\log_q(e/2)+1$$ where, as above, $q=1/p$.}

\begin{theo}[{\cite[Equation~(2)]{Matula2}}]\label{randomcat}
\red{For $0<p<1$ and $\epsilon>0$,} $$\red{\lim_{n\to\infty}\mathrm{Prob}\left(\lfloor z-\epsilon\rfloor\leq C\leq\lfloor z+\epsilon\rfloor\rule{0mm}{4mm}\right)=1.}$$
\end{theo}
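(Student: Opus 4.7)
The strategy is the first- and second-moment methods applied to the random variable $X_r$ counting $r$-cliques in $\Gamma\sim\mathcal{G}(n,p)$. One has
$$\mathbb{E}[X_r]=\binom{n}{r}p^{\binom{r}{2}},$$
and an application of Stirling's formula yields
$$\log_q\mathbb{E}[X_r]=r\log_q\!\left(\tfrac{ne}{r}\right)-\binom{r}{2}-\tfrac{1}{2}\log_q(2\pi r)+O(1/r).$$
Substituting $r=\lfloor z\rfloor+k$ for bounded integer $k$, the dominant pieces cancel by design of $z(n,p)$, and one finds $\log_q\mathbb{E}[X_r]\sim -k\log_q n$. Thus $\mathbb{E}[X_r]\to 0$ for any $r\geq\lfloor z+\epsilon\rfloor+1$ and $\mathbb{E}[X_r]\to\infty$ for any $r\leq\lfloor z-\epsilon\rfloor$; the constants appearing in $z$, in particular the summand $2\log_q(e/2)+1$, are precisely what is forced by this cancellation.

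\emph{Upper bound.} Setting $r_{+}=\lfloor z+\epsilon\rfloor+1$, Markov's inequality gives
$$\mathrm{Prob}(C\geq r_{+})=\mathrm{Prob}(X_{r_{+}}\geq 1)\leq\mathbb{E}[X_{r_{+}}]\to 0,$$
so $C\leq\lfloor z+\epsilon\rfloor$ asymptotically almost surely.

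\emph{Lower bound.} Write $r_{-}=\lfloor z-\epsilon\rfloor$. Expanding the second moment by conditioning on the size $j=|A\cap B|$ of the intersection of two $r_{-}$-subsets,
$$\mathbb{E}[X_{r_{-}}^{2}]=\sum_{j=0}^{r_{-}}\binom{n}{r_{-}}\binom{r_{-}}{j}\binom{n-r_{-}}{r_{-}-j}\,p^{\,2\binom{r_{-}}{2}-\binom{j}{2}},$$
I would show that the $j\in\{0,1\}$ summands contribute $(1+o(1))\mathbb{E}[X_{r_{-}}]^{2}$ while the rest are negligible, by analyzing the ratio $T_{j}/T_{j-1}=(r_{-}-j+1)^{2}\,q^{\,j-1}/\bigl(j(n-2r_{-}+j)\bigr)$ and showing geometric decay of the $T_{j}$'s on either side of $j=0$ across separate small-$j$ and large-$j$ regimes. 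Chebyshev's inequality then yields $\mathrm{Prob}(X_{r_{-}}=0)\leq\mathrm{Var}(X_{r_{-}})/\mathbb{E}[X_{r_{-}}]^{2}\to 0$, whence $C\geq r_{-}$ asymptotically almost surely.

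The main obstacle is the second-moment estimate: the ratio $T_{j}/T_{j-1}$ balances the factor $q^{\,j-1}$ against the binomial quotient, and the critical transition between decay and growth happens for $j$ of order $\log_{q}n$, so showing that no intermediate value of $j$ contributes anomalously requires a careful piecewise argument. Once this is established, combining the two bounds yields the concentration window $\lfloor z-\epsilon\rfloor\leq C\leq\lfloor z+\epsilon\rfloor$ with probability tending to $1$.
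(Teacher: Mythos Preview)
The paper does not give its own proof of this theorem: it is quoted as a known result of Matula (the citation \texttt{[Matula2]} in the statement's header), and the paper explicitly invokes it rather than reproving it (e.g.\ ``We will assume $s\geq2$, as the case $s=1$ in Theorem~\ref{randomTCs} is covered by Theorem~\ref{randomcat}''). So there is no in-paper proof to compare against.

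That said, your outline is the standard first/second-moment argument and is correct in structure. It is also worth observing that your lower-bound computation is exactly the $s=1$ specialization of the paper's proof of Theorem~\ref{randomTCs} in Section~\ref{seccliqes}: for $s=1$ the matrices $A\in D$ collapse to scalars $j\in\{0,1,\dots,r\}$, the paper's $T_A$ becomes your $T_j$, and the trichotomy short/intermediate/large of Propositions~\ref{teorsmalllarge}--\ref{finalprop} is precisely your ``separate small-$j$ and large-$j$ regimes'' together with a convexity argument on the intermediate interval. So although the paper formally defers to Matula, its machinery does recover the result, and your sketch matches that machinery.

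One phrasing to tighten: you write ``geometric decay of the $T_j$'s on either side of $j=0$,'' but $j\geq 0$. What actually happens (and what the paper's $s\geq2$ argument makes explicit) is that $T_j$ \emph{decreases} for $j$ below $(1-\lambda)\log_q n$, \emph{increases} for $j$ above $(1+\lambda)\log_q n$ up to $T_{r_-}=\mathbb{E}[X_{r_-}]=o(\mathbb{E}[X_{r_-}]^2)$, and the intermediate window is controlled by a log-convexity estimate bounding each such $T_j$ by $\max\{T_1,T_{r_-}\}$. Your ``main obstacle'' paragraph already anticipates this, so the gap is only in wording, not in substance.
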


\red{It should be stressed that the probability parameter $p$ is fixed throughout the limiting process. In common parlance, Theorem~\ref{randomcat} can be stated by the assertion that, for a fixed $p\in(0,1)$, the inequalities $\lfloor z-\epsilon\rfloor\leq C(\Gamma)\leq\lfloor z+\epsilon\rfloor$ hold {\it asymptotically almost surely} for random graphs $\Gamma\in\mathcal{G}(n,p)$. Alternatively, since $0\leq\lfloor z+\epsilon\rfloor-\lfloor z-\epsilon\rfloor\leq 1$ when $\epsilon\leq1/2$, $C$ is asymptotically almost surely determined by $z$ with spikes of at most a unit whose appearance depend on the ``resolution'' parameter $\epsilon$ used.}

\medskip
\red{Since $z$ is logarithmic in $n$, it is conceivable that Theorem~\ref{randomcat} admits a strengthening to an assertion about the existence in random graphs of arbitrarily many pairwise disjoint asymptotically-largest-possible cliques. A first step in such a direction was taken in~\cite{CosFar} in response to the desire of understanding the stochastic properties of the collision-free motion planning of multiple particles on graphs with a large number of vertices. Roughly speaking, Costa and Farber showed that, with probability tending to~$1$ as $n$ tends to infinity, a random graph in $\mathcal{G}(n,p)$ has a pair of disjoint asymptotically-largest-possible cliques. In our first main result (Theorem~\ref{randomTCs} below) we show, more generally, that for any fixed positive integer $s$, and with probability tending to 1 as $n$ tends to infinity, a random graph in $\mathcal{G}(n,p)$ has $s$ pairwise-disjoint such asymptotically-largest-possible cliques.}

\begin{definition}\label{multiclique}
\red{An $s$-th multi-clique of size $r$ of a (random) graph $\Gamma\in\mathcal{G}(n,p)$ is an ordered $s$-tuple $(V_1,\ldots,V_s)$ of pairwise disjoint subsets $V_i\subseteq [n]$, each of cardinality $r$, such that each of the induced  subgraphs $\Gamma_{|V_i}$ is complete.}
\end{definition}

\red{We do not require in Definition~\ref{multiclique} that each $V_i$ is a clique of $\Gamma$ (i.e.~a complete subgraph of $\Gamma$ with the maximal possible number of vertices). Yet the word \emph{multi-clique} has been used in the above definition in view of Theorem~\ref{randomcat}, and since we will be concerned with the case $r=\lfloor z-\epsilon\rfloor$ for some (small but fixed) $\epsilon>0$.}

\medskip
\red{Section~\ref{seccliqes} is devoted to the proof of:}

\begin{theo}\label{randomTCs}
\red{Fix a positive integer $s$, a positive real number $\epsilon$, and a probability parameter $p\in(0,1)$. Then, with probability tending to 1 as $n\to\infty$, a random graph in $\mathcal{G}(n,p)$ has an $s$-th  multi-clique of size $\lfloor z-\epsilon\rfloor$.}
\end{theo}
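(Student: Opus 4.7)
Set $r=\lfloor z-\epsilon\rfloor$ and let $X$ count the $s$-th multi-cliques of size $r$ in $\Gamma\in\mathcal{G}(n,p)$. The plan is to apply the second moment method: by the Paley--Zygmund inequality, $\mathrm{Prob}(X\geq 1)\geq E[X]^2/E[X^2]$, so it is enough to prove $E[X^2]/E[X]^2\to 1$ as $n\to\infty$. Because the events ``$V_i$ is a clique'' for pairwise disjoint $V_i$ depend on disjoint sets of edges and are therefore independent, one has
$$E[X]=N_{n,s,r}\cdot p^{s\binom{r}{2}},\qquad N_{n,s,r}:=\prod_{i=0}^{s-1}\binom{n-ir}{r}.$$
Since $sr=O(\log n)=o(n)$, $E[X]$ is asymptotic to $\left(\binom{n}{r}p^{\binom{r}{2}}\right)^{s}$, and the estimates underlying Theorem~\ref{randomcat} (in particular, $\binom{n}{r}p^{\binom{r}{2}}$ grows at least polynomially in $n$ when $r\leq z-\epsilon$) yield $E[X]\to\infty$.

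For the second moment, expand $E[X^2]=\sum_{\sigma,\tau}\mathrm{Prob}(I_\sigma I_\tau=1)$ over ordered pairs of multi-cliques $\sigma=(V_i)$, $\tau=(W_j)$ of size $r$, and attach to each pair its intersection matrix $\mathbf{k}=(k_{ij})$ with $k_{ij}=|V_i\cap W_j|$; the constraints $\sum_j k_{ij}\leq r$ and $\sum_i k_{ij}\leq r$ reflect the internal disjointness of $\sigma$ and of $\tau$. An edge $\{a,b\}$ lies in both $E(\sigma)$ and $E(\tau)$ precisely when $\{a,b\}\subseteq V_i\cap W_j$ for some (necessarily unique) pair $(i,j)$, so $|E(\sigma)\cup E(\tau)|=2s\binom{r}{2}-\sum_{i,j}\binom{k_{ij}}{2}$ and therefore $\mathrm{Prob}(I_\sigma I_\tau=1)=p^{2s\binom{r}{2}}\cdot q^{\sum_{i,j}\binom{k_{ij}}{2}}$ with $q=1/p$. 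The trivial intersection matrix $\mathbf{k}=0$ corresponds to vertex-disjoint pairs, and its contribution to $E[X^2]/E[X]^2$ equals $N_{n-sr,s,r}/N_{n,s,r}$, which tends to $1$ since $sr=o(n)$.

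The main obstacle is showing that the remaining patterns $\mathbf{k}\neq 0$ contribute $o(1)$. I would parametrize a pair with pattern $\mathbf{k}$ by first choosing $\sigma$, then partitioning each $V_i$ into disjoint pieces of sizes $k_{i1},\ldots,k_{is}$ together with a leftover, and finally extending each $\bigsqcup_i(V_i\cap W_j)$ to $W_j$ using pairwise disjoint free vertices drawn from $[n]\setminus\bigcup_i V_i$. This decomposes the contribution of $\mathbf{k}$ into a product of factors indexed by the entries $k_{ij}$, each essentially of the classical single-clique type $\binom{r}{k_{ij}}\binom{n-r}{r-k_{ij}}q^{\binom{k_{ij}}{2}}\big/\binom{n}{r}$. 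Summing over $\mathbf{k}\neq 0$ therefore reduces, entry by entry, to the bound
$$\sum_{k=2}^{r}\binom{r}{k}\binom{n-r}{r-k}\frac{q^{\binom{k}{2}}}{\binom{n}{r}}=o(1),$$
which is the heart of the classical single-clique variance calculation in the regime $r\leq z-\epsilon$. Combining the trivial and nontrivial pieces gives $E[X^2]/E[X]^2\to 1$, and Paley--Zygmund completes the proof.
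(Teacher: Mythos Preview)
Your setup (second moment method, intersection-matrix decomposition, and the observation that the $\mathbf{k}=0$ term contributes $1+o(1)$) is exactly what the paper does. The gap is in the handling of $\mathbf{k}\neq 0$.

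The parametrization you describe gives, for a fixed pattern $\mathbf{k}$, the count
\[
N_{n,s,r}\cdot\prod_{i=1}^{s}\binom{r}{k_{i1},\ldots,k_{is}}\cdot\binom{n-sr}{\,r-\Sigma_1,\ldots,r-\Sigma_s\,},\qquad \Sigma_j=\sum_i k_{ij},
\]
and this does \emph{not} factor over individual entries $k_{ij}$: the row-multinomials couple the entries in each row, and the ``free-vertex'' multinomial couples the column sums $\Sigma_j$. Replacing the multinomials by products of binomials gives at best a product over \emph{columns}, with $j$-th factor of the form
\[
\frac{\prod_i\binom{r}{k_{ij}}\cdot\binom{n-sr}{r-\Sigma_j}\cdot q^{\sum_i\binom{k_{ij}}{2}}}{\binom{n}{r}},
\]
which is an ``$s$ disjoint sets versus one'' quantity, not the classical single-clique term $\binom{r}{k}\binom{n-r}{r-k}q^{\binom{k}{2}}/\binom{n}{r}$. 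In particular the exponent is $\sum_i\binom{k_{ij}}{2}$ rather than $\binom{\Sigma_j}{2}$, and the binomial prefactor $\prod_i\binom{r}{k_{ij}}$ can be vastly larger than $\binom{r}{\Sigma_j}$; so the column sums do not reduce ``entry by entry'' to the classical $s=1$ bound $\sum_{k\ge 2}\binom{r}{k}\binom{n-r}{r-k}q^{\binom{k}{2}}/\binom{n}{r}=o(1)$ as you assert. Some genuine work on patterns with several nonzero entries in the same row or column is needed, and you have not supplied it.

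This missing step is exactly where the paper spends most of its effort: rather than factoring, it proves monotonicity lemmas showing that $T_{\mathbf{k}}$ increases when one lowers a small entry or raises a large entry (and a convexity/comparison argument for intermediate entries), thereby reducing every $\mathbf{k}\neq 0$ to a diagonal matrix with entries in $\{0,1,r\}$, which are then handled directly. That reduction is the real content of the proof, and it does not follow from your factorization claim.
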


\red{We use Theorem~\ref{randomTCs} in order to generalize Costa and Farber's result to the sequential motion planning realm in topological robotics. In Section~\ref{sectcs} we review the definition and basic properties of Rudyak's higher topological complexity, a concept generalizing Farber's topological viewpoint of the motion planning problem. We then use Theorem~\ref{randomTCs} to give the following asymptotical description (with $\epsilon$-resolution spikes of at most $s$ units) of the $s$-th topological complexity of random right angled Artin groups:}

\begin{theo}\label{randomtopTCs}
\red{For a random graph $\Gamma\in\mathcal{G}(n,p)$, let $K_\Gamma$ stand for the (random) Eilenberg-MacLane space associated to the right angled Artin group defined by $\Gamma$. Then, for any positive real constant $\epsilon$, positive integer $s$, and probability parameter $p\in(0,1)$, the random variable $\TC_s$ given by $\TC_s(\Gamma)=\TC_s(K_\Gamma)$ satisfies}
$$
\red{\lim_{n\to\infty}\mathrm{Prob}\left(s\lfloor z-\epsilon\rfloor\leq\TC_s \leq s\lfloor z+\epsilon\rfloor\rule{0mm}{4mm}\right)=1.}
$$
\end{theo}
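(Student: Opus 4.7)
The plan is to reduce the asymptotic statement to two deterministic inequalities on $\TC_s(K_\Gamma)$: an upper bound $\TC_s(K_\Gamma)\leq s\cdot\omega(\Gamma)$ valid for every $\Gamma$, and a lower bound $\TC_s(K_\Gamma)\geq sr$ whenever $\Gamma$ admits an $s$-multi-clique of size $r$. Once these are in place, the upper half of the theorem follows by feeding the first inequality the estimate $\omega(\Gamma)\leq\lfloor z+\epsilon\rfloor$ supplied by Theorem~\ref{randomcat}, while the lower half follows from the second inequality together with Theorem~\ref{randomTCs}; intersecting the two a.a.s.\ events gives the conclusion.

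The upper bound is the standard dimensional estimate. The Eilenberg-MacLane space $K_\Gamma$ is homotopy equivalent to the Salvetti complex, a CW complex of dimension $\hdim(K_\Gamma)=\omega(\Gamma)$, so the general Schwarz-genus bound $\TC_s(X)\leq s\cdot\hdim(X)$ applies.

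For the lower bound I would use the zero-divisor cup-length $\zcl_s\leq\TC_s$. The cohomology $H^*(K_\Gamma;\mathbb{Q})$ is the exterior face ring $\Lambda_\Gamma$ generated by degree-one classes $e_v$ (one per vertex) with $e_v e_w=0$ whenever $\{v,w\}$ is a non-edge, so a monomial $\prod_{v\in S}e_v$ is nonzero exactly when $S$ is a clique of $\Gamma$. Given an $s$-multi-clique $(V_1,\ldots,V_s)$ of size $r$, for each pair $(j,v)$ with $v\in V_j$ I would form the degree-one zero-divisor
$$
\bar e_{v,j}:=e_v^{(j)}-e_v^{(j+1)} \;\in\; H^*(K_\Gamma^s),
$$
where $e_v^{(i)}$ stands for $e_v$ placed in the $i$-th tensor factor and indices are read modulo~$s$. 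The argument then reduces to showing that the cup product
$$
P:=\prod_{j=1}^{s}\prod_{v\in V_j}\bar e_{v,j},
$$
a product of $sr$ zero-divisors sitting in total degree $sr$, is nonzero, which gives $\TC_s(K_\Gamma)\geq\zcl_s(K_\Gamma)\geq sr$.

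The hard part is verifying $P\neq0$, since the hypothesis allows edges of $\Gamma$ running between distinct $V_j$'s, under which many terms of the expansion become nonzero. Expanding $P$ yields a sum $\sum_T\epsilon(T)\,X(T)$ over the $2^{sr}$ sign choices $T$, whose $i$-th tensor factor is a monomial supported on a subset of the form $A_i\cup B_{i-1}\subseteq V_i\sqcup V_{i-1}$. The key observation is that the \emph{disjointness} of the $V_j$'s ensures that $A_i\subseteq V_i$ and $B_{i-1}\subseteq V_{i-1}$ are recovered separately as the intersections of the support with these two sets, so the full tuple of supports over $i=1,\ldots,s$ recovers the sign-choice function $T$ uniquely. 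Hence distinct surviving $X(T)$'s are pairwise linearly independent basis monomials in $\Lambda_\Gamma^{\otimes s}$, and since at least the ``all-$e_v^{(j)}$'' term $\bigotimes_j\prod_{v\in V_j}e_v$ is nonzero (each $V_j$ being a clique of $\Gamma$), $P$ is a nontrivial $\pm1$-combination of linearly independent elements, and therefore nonzero.
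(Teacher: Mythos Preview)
Your argument is correct and the overall architecture matches the paper's: the upper bound via $\TC_s(K_\Gamma)\leq s\cdot\hdim(K_\Gamma)=s\cdot C(\Gamma)$ combined with Theorem~\ref{randomcat}, and the lower bound by producing, from an $s$-th multi-clique of size $r$, a witness to $\TC_s(K_\Gamma)\geq sr$.

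The difference lies entirely in how that lower-bound witness is obtained. The paper does not compute any cup products; it simply quotes the combinatorial formula of \cite{GGY} (Theorem~\ref{determdescri}),
\[
\TC_s(K_\Gamma)=\max\Bigl\{\textstyle\sum_{\ell}|V_\ell|-\bigl|\bigcap_\ell V_\ell\bigr|\Bigr\},
\]
and evaluates it on a disjoint $s$-tuple of cliques of size $r$ to get $sr$. Your route instead reproves exactly the piece of that formula needed here, via an explicit zero-divisor cup-length computation in $\Lambda_\Gamma^{\otimes s}$. The injectivity step---recovering the sign-choice $T$ from the support tuple $(S_1,\ldots,S_s)$ because $S_i\cap V_i$ and $S_i\cap V_{i-1}$ separate cleanly---is precisely where pairwise disjointness of the $V_j$'s is consumed, and it makes the nonvanishing of $P$ transparent. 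This buys you a self-contained proof that does not depend on the black-box \cite{GGY} result; conversely, the paper's approach is shorter and simultaneously yields the exact value of $\TC_s(K_\Gamma)$, not just the inequality.

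One small caveat: your cyclic construction $\bar e_{v,j}=e_v^{(j)}-e_v^{(j+1)}$ degenerates to zero when $s=1$, so as written the argument only treats $s\geq2$. For $s=1$ (where $\TC_1=\cat$ by convention) the ordinary cup-length $\prod_{v\in V_1}e_v\neq0$ gives $\cat(K_\Gamma)\geq r$ directly; you should say this explicitly.
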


\medskip
\red{As explained in Proposition~\ref{desccoho} (see also~\cite[Section~3]{CosFar}), in this context it is safe to set $\TC_1(K_\Gamma)=\cat(K_\Gamma)$, the Lusternik-Shnirelmann category of $K_\Gamma$. The case $s=1$ in Theorem~\ref{randomtopTCs} then gives a topological interpretation of Matulas's Theorem~\ref{randomcat}, while the case $s=2$ in Theorem~\ref{randomtopTCs} recovers the main result in~\cite{CosFar}.}

\medskip
\red{Since the $s$-th topological complexity of a cellular complex $X$ is bounded from above by $s$ times the dimension of $X$ (\cite[Theorem~3.9]{bgrt}), Theorem~\ref{randomtopTCs} implies that the $s$-th topological complexity of random Eilenberg-MacLane spaces for right angled Artin groups asymptotically almost surely lie on an $s$-neighborhood of the largest dimensionally-allowed value (Corollary~\ref{neighborhood} below). \blue{We believe that the neighborhood radius can be dropped down to zero by letting the probability parameter be a suitably chosen function of $n$---this will be worked out elsewhere.}}

\medskip\red{The methods in this paper owe much to the ideas in~\cite{CosFar}. In particular, the proof of Theorem~\ref{randomtopTCs} is based on the fact that $K_\Gamma=(S^1,\star)^{\Delta_\Gamma}$, the polyhedral product complex determined by the (based) unit circle $(S^1,\star)$ and by the flag (clique) complex $\Delta_\Gamma$ spanned by $\Gamma$. Using the full strength of~\cite{GGY}, where a combinatorial description of the higher topological complexities of more general polyhedral product complexes is given, it should be possible to describe stochastic properties of the higher topological complexities of (random) polyhedral product spaces associated to other families of (random) abstract simplicial complexes.} %Such a task will be worked out elsewhere.

\section{Maximal disjoint cliques}\label{seccliqes}
\red{We now deal with the proof of Theorem~\ref{randomTCs}. Throughout this section we let $r:=\lfloor z-\epsilon\rfloor$, a function on $n$, $p$, and $\epsilon$. Although $n$ will indeed vary, in what follows the parameters $p$ and $\epsilon$ (as well as $s$) will be kept fixed. We will assume $s\geq2$, as the case $s=1$ in Theorem~\ref{randomTCs} is covered by Theorem~\ref{randomcat}.}

%\begin{theo}For any $\epsilon >0$, $$\lim_{n \rightarrow \infty} \,\, \text{Prob} (\floor*{z(n, p)- \epsilon} \leq C_{n, p} \leq \floor*{z(n, p) + \epsilon})=1.$$\end{theo}

%Let $\Gamma \in \mathcal{G}(n, p)$. Define a $(r, s)-$clique in $\Gamma$ as a $s$ ordered tuple $(W_1, \ldots, W_s)$, where each $W_i \subset [n]$ represents a $r$-clique in $\Gamma$  for all $i \in [s]$ (i.e., $W_i$ is a complete subgraph of $\Gamma$ on $r$ vertices) and $W_{i}\cap W_{j}= \emptyset$ for all $i \neq j$. Note that this notation has different meaning to the notation in \cite{CosFar}, for instance, our $(r, 2)-$clique matches with their $(r,r)$ \emph{bi clique}.

\medskip
Let $X_{r,s}: \mathcal{G}(n, p) \rightarrow \mathbb{Z}$ be the random variable that assigns to each random graph the number of its $s$-th multi-cliques of size $r$. \red{Note that $X_{r,s}(\Gamma)$ is divisible by $s!$, for an $s$-th multi-clique is an \emph{ordered} $s$-tuple of disjoint sets. We could of course normalize by dividing by $s!$, but the unnormalized setting yields slightly simpler formulas in the arguments below.}

\medskip
By the second moment method,
\begin{equation}\label{ratio}
\mathrm{Prob}\left(\rule{0mm}{3.5mm}X_{r,s}>0\right)\geq \frac{E(X_{r,s})^2}{E(X_{r,s}^2)},
\end{equation}
\red{so it suffices to show that the ratio on the right hand side of~(\ref{ratio}) tends to 1 as $n\to\infty$.} 

\medskip
\red{Let $\mathcal{W}(s)$ stand for the set of $s$-tuples $(W_1, \ldots, W_s)$ of \red{pairwise} disjoint subsets $W_i$ of $[n]$, each having cardinality~$r$. Each $\mathbf{W}\in\mathcal{W}(s)$ determines a random variable $I_{\mathbf{W}}: \mathcal{G}(n, p) \rightarrow \{0, 1\}$ given by}
$$
\red{I_{\mathbf{W}}(\Gamma)=\begin{cases}
1, & \mbox{if $\mathbf{W}$ is an $s$-th multi-clique of size $r$ of $\Gamma$;}\\
0, & \mbox{otherwise.}
\end{cases}}
$$
In these terms, $X_{r,s}$ can be written as $$X_{r,s}= \sum_{\mathbf{W}\in\mathcal{W}(s)} I_{\mathbf{W}},$$ and since $E(I_{\mathbf{W}})=p^{s\binom{r}{2}}$ for each $\mathbf{W}\in\mathcal{W}(s)$, linearity of the expectation yields
$$
E(X_{r,s})=\displaystyle{\binom{n}{\,\underbrace{r,\ldots,r}_s\,}p^{s\binom{r}{2}}}
$$
where $\binom{a}{\,b_1,\ldots,b_k\,}$ stands for the multinomial coefficient
%\begin{center} $\begin{array}{ll} E(X_{r,s})&=\displaystyle{\binom{n}{r}\binom{n-r}{r}\cdots\binom{n-(s-1)r}{r}p^{s\binom{r}{2}}}\\ &=\displaystyle{\frac{n!}{(r!)^s(n-sr)!}}p^{s\binom{r}{2}}\\ &=\displaystyle{\binom{n}{\underbrace{r,...,r}_s}p^{s\binom{r}{2}}}.\end{array}$ \end{center} For $n \in \mathbb{N}$ and $\textbf{m}=(m_1, \ldots, m_k) \in \mathbb{N}^{\times k}$ (with $k \in \mathbb{N}$ and $n\geq \sum_{i=1}^k m_i$), consider the multinomial coefficient
$$
\binom{a}{b_1,\ldots,b_k}=\frac{a!}{\left(\prod_{i=1}^k b_i ! \right) \left(a-\sum_{i=1}^k b_i \right) !}
$$
determined by non-negative integers $a,b_1,\ldots,b_k$ with $k\in\mathbb{N}$ and $a\geq\sum_{i=1}^kb_i$. On the other hand, in order to deal with $E(X^2_{r,s})$, write $X_{r,s}^2=\sum I_{\mathbf{W}}\cdot I_{\mathbf{W}'}$ and note that
$$
E(I_{(W_1,\ldots,W_s)}\cdot I_{(W_1',\ldots,W_s')})= p^{2s \binom{r}{2}- \sum \binom{a_{ij}}{2}}
$$
where we set $a_{ij}:=|W_i \cap W'_j|$. \red{We say that the pair $(W,W')$ has intersection type given by the matrix $A=(a_{ij})$.}

\medskip
Before using the previous considerations to estimate the right hand side term of~(\ref{ratio}), it is convenient to introduce some auxiliary notation. Given an $(s \times s)$-matrix $A=(a_{ij})$ with integer coefficients, let $A_i$ and $A^i$ ($1 \leq i \leq s$) denote the $s$-th tuples determined by the $i$-th row and the $i$-th column of $A$, respectively. Moreover, let $\Sigma(c_1,\ldots,c_s):=\Sigma_{i=1}^{s}c_i$.
%$\Sigma A_i$ and $\Sigma A^j$ denote the sum of the elements in the $i$-row and the sum of the elements in the $j$-column of $A$, respectively.
In these terms, we get
\begin{equation}\label{expec2}
\frac{E(X^2_{r,s})}{E(X_{r,s})^2}= \sum_{A \in D} F_A \cdot q^{L(A)}= \sum_{A \in D} T_A.
\end{equation}
Here the summations run over the set $D$ of $(s\times s)$-matrices $A=(a_{ij})$ with non-negative integer coefficients satisfying $\red{\max_{1\leq i\leq s}\left\{\Sigma A_i,\Sigma A^i\right\}\leq r,}$ and we have set
%$$D=\left\{A=(a_{ij}) \in \mathcal{M}_{s \times s}\,\, | \,\, \text{with}\,\, a_{ij} \geq 0\,\; \text{and}\;\, \red{\max_{1\leq i\leq s}\left\{\Sigma A_i,\Sigma A^i\right\}\leq r}% \sum_{j=1}^s a_{ij} \leq r, \,\, \sum_{i=1}^s a_{ij} \leq r \right\},$$
$$
F_A=\frac{\binom{r}{\,A^1\,}\binom{r}{\,A^2\,}\cdots \binom{r}{\,A^s\,}\binom{n-sr}{\,r-\Sigma A_1,\,r-\Sigma A_2,\ldots,\,r-\Sigma A_s\,}}{\binom{n}{\,\underbrace{\mbox{\scriptsize$r,\ldots,r$}}_s\,}},\;\;\, L(A)=\sum_{i, j=1}^s \binom{a_{ij}}{2},\;\;\,\mbox{and}\;\;\;T_A=F_A\cdot q^{L(A)}.$$
%$$L(A)=\sum_{i, j=1}^s \binom{a_{ij}}{2},$$ and $$T_A=F_A\cdot q^{L(A)}.$$
\begin{remark}\label{sumfa}{\em
Note that
$$\sum_{A \in D} F_A =\red{\frac{\sum_{A \in D}\binom{n}{\,r,\ldots,r\,}\binom{r}{\,A^1\,}\binom{r}{\,A^2\,}\cdots \binom{r}{\,A^s\,}\binom{n-sr}{\,r-\Sigma A_1,\,r-\Sigma A_2,\ldots,\,r-\Sigma A_s\,}}{\binom{n}{\,r,\ldots,r\,}^2}}=1,$$
\red{as both the numerator and denominator in the quotient give the cardinality of $\mathcal{W}(s)^2$.}
}\end{remark}

Our updated task is to show that $\sum_{A\in D} T_A\to1$ as $n\to\infty$. In fact, Lemma~\ref{auxil} below implies that it suffices to show
\begin{equation}\label{sumTA}
\lim_{n\to\infty}\left(\,\sum_{A\in D-\{A_0\}} T_A\right)=0
\end{equation}
where $A_0 \in D$ is the $0$-matrix.

\begin{lema}\label{auxil}
$\lim_{n\to\infty}T_{A_0}=1$.
\end{lema}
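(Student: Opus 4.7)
The plan is to observe that since $A_0$ is the zero matrix we have $L(A_0)=0$, hence $T_{A_0}=F_{A_0}$, and the task reduces to showing $F_{A_0}\to 1$. First I would simplify $F_{A_0}$ by noting that each column $A^i$ of $A_0$ is the zero $s$-tuple (so that $\binom{r}{A^i}=1$) and each row sum $\Sigma A_i$ vanishes (so that the final multinomial becomes $\binom{n-sr}{r,\ldots,r}$). Cancelling the common factor $(r!)^s$ in numerator and denominator then yields
$$
F_{A_0}\;=\;\frac{\binom{n-sr}{r,\ldots,r}}{\binom{n}{r,\ldots,r}}\;=\;\frac{(n-sr)!\,(n-sr)!}{n!\,(n-2sr)!}\;=\;\prod_{i=0}^{sr-1}\frac{n-sr-i}{n-i}.
$$

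Next I would bound this telescoping product. Trivially each factor is at most $1$, so $F_{A_0}\leq 1$ (this also follows from Remark~\ref{sumfa} together with nonnegativity of the $F_A$). For the matching lower bound, I would exploit that $r=\lfloor z-\epsilon\rfloor$ is logarithmic in $n$, so $sr=O(\log n)$ and in particular $sr/n\to 0$. Each factor is at least $(n-2sr)/(n-sr)=1-sr/(n-sr)$, so
$$
F_{A_0}\;\geq\;\left(1-\frac{sr}{n-sr}\right)^{sr}.
$$
Taking logarithms and using $\log(1-x)\geq -2x$ for small $x$ gives $\log F_{A_0}\geq -2(sr)^2/(n-sr)$, which tends to $0$ since $(\log n)^2/n\to 0$. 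Sandwiching produces $F_{A_0}\to 1$, as required.

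I do not anticipate any substantive obstacle here: the argument is a routine asymptotic calculation driven entirely by the logarithmic growth of $r$, which makes $sr$ negligible compared to $n$ and forces the quotient $F_{A_0}$---which can be interpreted as the conditional probability that a second uniformly chosen $s$-tuple of pairwise disjoint $r$-subsets of $[n]$ lies entirely in the complement of a first such $s$-tuple---to converge to $1$. The genuine work of the section lies not in this lemma but in the subsequent verification that $\sum_{A\neq A_0}T_A\to 0$.
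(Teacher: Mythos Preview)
Your proposal is correct and follows essentially the same route as the paper: both reduce $T_{A_0}$ to the product $\prod_{k=0}^{sr-1}(1-\frac{sr}{n-k})$, bound it above by $1$ (via Remark~\ref{sumfa} or trivially), bound it below by $(1-\frac{sr}{n-sr+1})^{sr}$ (you use the slightly cruder $(1-\frac{sr}{n-sr})^{sr}$, which is fine), and then invoke $r=O(\log n)$ to squeeze. The only cosmetic difference is that the paper expands the lower bound via the binomial theorem to reach $(1-\frac{2sr^2}{n-sr+1})^{s/2}$, whereas you take a logarithm and use $\log(1-x)\geq -2x$; both are routine ways to show $(1-o(1))^{sr}\to 1$.
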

\begin{proof} We have

\vspace{-8mm}
\begin{eqnarray*}
T_{A_0}&=& F_{A_0}\;\;=\;\;\frac{\binom{n-rs}{\,\underbrace{\mbox{\scriptsize$r,\ldots,r$}}_s\,}}{\binom{n}{\,\underbrace{\mbox{\scriptsize$r,\ldots,r$}}_s\,}}\;\;=\;\;\frac{\frac{(n-rs)!}{(r!)^s(n-2rs)!}}{\frac{n!}{(r!)^s(n-rs)!}}\\
&=&\frac{(n-rs)!(n-rs)!}{n!(n-2rs)!}\;\;=\;\;\frac{(n-2rs+1)\cdots (n-rs)}{(n-rs+1)\cdots n}\\
&=&\prod_{k=0}^{sr-1}\left(\frac{n-k-sr}{n-k}\right)\;\;=\;\;\prod_{k=0}^{sr-1}\left(1-\frac{sr}{n-k}\right)\\&\geq&\left(1-\frac{sr}{n-sr+1} \right)^{sr}\,=\;\; \left(\left(1-\frac{sr}{n-sr+1} \right)^{2r} \right)^{s/2}.
\end{eqnarray*}
\red{Further, since $\left(1-\frac{sr}{n-sr+1} \right)^{2r}$ can be written as
\begin{eqnarray*}
\left(\hspace{-.3mm}1-\frac{2sr^2}{n-sr+1}\right)+\left[\!\binom{2r}{2}\!\!\left(\frac{sr}{n-sr+1}\right)^{\!2}-\binom{2r}{3}\!\!\left(\frac{sr}{n-sr+1}\right)^{\!3}\right]+\cdots+\left[\!\left(\frac{sr}{n-sr+1}\right)^{\!2r}\right],
\end{eqnarray*}
we see that}
$$
\red{T_{A_0}\geq\left(1-\frac{2sr^2}{n-sr+1} \right)^{s/2}}
$$
for $n$ large enough\footnote{\red{Here and in what follows we use without further notice the easily checked fact that $r^k=o(n)$ for any positive integer $k$.}}. The result then follows from Remark~\ref{sumfa} and from the fact that the term on the right hand side of the latter inequality tends to 1 as $n\to\infty$.
\end{proof}

\redd{The rest of this section is devoted to the proof of~(\ref{sumTA}), which} requires a number of technical preliminary results.  Our first goal is Proposition~\ref{mainine} below, a generalization of~\cite[Lemma~6]{CosFar}.
\begin{lema}\label{c_n}
\redd{For each} positive integer $m$, \redd{there is a positive integer $N(m)$ and a} positive real number~\redd{$\alpha(m)$} such that the number $c_n$ defined through the formula
$$
\binom{n}{mr}=c_n\left(\frac{n}{mr}\right)^{mr}e^{mr}(mr)^{-1/2}
$$
satisfies $c_n \geq \redd{\alpha(m)} > 0$ \redd{whenever $n\geq N(m)$.} %\redd{(recall $r$ depends on $n$, $\epsilon$, and $p$, but the last two parameters are fixed).}
\end{lema}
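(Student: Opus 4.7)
The plan is to combine the falling-factorial form of $\binom{n}{mr}$ with an upper Stirling estimate for $(mr)!$. First, I would rewrite
$$\binom{n}{mr}=\frac{1}{(mr)!}\prod_{i=0}^{mr-1}(n-i)=\frac{n^{mr}}{(mr)!}\prod_{i=0}^{mr-1}\!\left(1-\frac{i}{n}\right),$$
and then apply Robbins' refinement of Stirling's formula, $(mr)!\leq\sqrt{2\pi}\,(mr)^{mr+1/2}e^{-mr}e^{1/(12mr)}$, to the denominator. Rearranging would then give
$$\binom{n}{mr}\geq\frac{P(n,m)}{\sqrt{2\pi}\,e^{1/(12mr)}}\cdot\left(\frac{n}{mr}\right)^{mr}\!e^{mr}(mr)^{-1/2},$$
where $P(n,m):=\prod_{i=0}^{mr-1}(1-i/n)$. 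Comparing this with the defining equality for $c_n$ in the lemma yields the lower bound $c_n\geq P(n,m)\big/\bigl(\sqrt{2\pi}\,e^{1/(12mr)}\bigr)$.

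Next I would show that both factors on the right-hand side tend to $1$ as $n\to\infty$. Using the elementary inequality $\log(1-x)\geq -x-x^2$ (valid for $0\leq x\leq 1/2$), together with the footnoted observation that $r^k=o(n)$ for any fixed positive integer $k$, one obtains $\log P(n,m)\geq -(mr)^2/(2n)-O((mr)^3/n^2)$, which tends to $0$; hence $P(n,m)\to 1$. Similarly, since $r\to\infty$, the factor $e^{1/(12mr)}$ also tends to $1$. Consequently $\liminf_{n\to\infty}c_n\geq 1/\sqrt{2\pi}$, so choosing $N(m)$ large enough that both factors lie within, say, a factor of $1/2$ of their limits, and setting $\alpha(m):=1/(2\sqrt{2\pi})$, delivers the claim.

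I do not anticipate any serious obstacle; the argument is a routine Stirling estimate. The only genuine role of $m$ is in determining how large $N(m)$ must be so that $(mr)^2\ll n$, and this is eventually achieved for each fixed $m$ because $r=O(\log n)$. In particular $\alpha$ can even be chosen independent of $m$, though only the weaker $m$-dependent formulation is needed for the sequel.
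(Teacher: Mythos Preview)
Your proof is correct and, like the paper's, is a routine Stirling estimate arriving at the same asymptotic lower bound $\liminf_{n\to\infty} c_n\geq 1/\sqrt{2\pi}$. The execution differs slightly: the paper applies Stirling's formula to all three factorials in $\binom{n}{mr}=n!/\bigl((mr)!(n-mr)!\bigr)$, obtains an exact expression for $c_n$ involving the factor $\bigl(n/(n-mr)\bigr)^{n-mr}e^{-mr}$, and then bounds that factor from below via the inequality $(1-b/a)^x\leq e^{-bx/a}$. You instead keep $n!/(n-mr)!=n^{mr}\prod_{i=0}^{mr-1}(1-i/n)$ exact, apply Stirling only to $(mr)!$, and bound the product directly using $\log(1-x)\geq -x-x^2$. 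Your route is marginally more economical (one Stirling application rather than three), and your closing remark that $\alpha$ can in fact be chosen independently of $m$ is correct and mildly sharpens the statement, though only the $m$-dependent form is used later.
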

\begin{proof}
Using Stirling's formula for factorials (see for instance formula~$(1.4)$ in \cite{Bollobas})
\begin{equation}\label{stirlingformula}
n != \Big(\frac{n}{e} \Big)^n \sqrt{2 \pi n} \hspace{1mm}e^{\alpha_n}, \quad \quad \frac{1}{12n+1} < \alpha_n < \frac{1}{12n},
\end{equation}
 we have
\begin{eqnarray*}
\binom{n}{mr}&=& \frac{1}{\sqrt{2\pi}} \Big(\frac{n}{mr}\Big)^{mr}\Big(\frac{n}{n-mr}\Big)^{n-mr}\sqrt{\frac{n}{mr(n-mr)}}\;\; \ell_n  \\
& =&  c_n\left( \frac{n}{mr}\right)^{mr} e^{mr}{(mr)}^{-\frac{1}{2}}
\end{eqnarray*}
where 
$$\ell_n=\frac{e^{\alpha_n}}{e^{\alpha_{\redd{m}r}} e^{\alpha_{n-\redd{m}r}}} \rightarrow  1\quad \text{ as} \quad n \rightarrow \infty$$
and
$$c_n=\frac{1}{\sqrt{2\pi}} \left( \frac{n}{n-mr}\right)^{\!n-mr}\!\sqrt{\frac{n}{n-mr}}\;\,e^{-mr}\; \ell_n.$$ In order to check that, for large enough $n$, $c_n$ is bounded from below by a fixed positive real number $\alpha$ \redd{(which in general depends on $m$),} we use the inequality $$\left(\frac{a-b}{a}\right)^x \leq e^{-\frac{b}{a} x},$$ which holds for any \red{positive integers $a$, $b$, and $x$ with $b < a$.} Taking in particular $a=n$, $b=mr$ and $x=n-mr-1$, we get
$$
\left( \frac{n-mr}{n} \right)^{n-mr-1} \leq  e^{- \frac{mr}{n}(n-mr-1)} = e^{-mr} e^{\frac{m^2r^2}{n}} e^{\frac{mr}{n}}
$$
or, equivalently,
$$
\left( \frac{n-mr}{n} \right)^{-1} e^{-\frac{m^2r^2}{n}} e^{-\frac{mr}{n}} \leq  \left(\frac{n}{n-mr} \right)^{n-mr}e^{-mr}.
$$
Since the left hand side of the latter inequality approaches $1$ as $n\to\infty$, there exits a positive real number $\alpha$ such that $$c_n=\frac{1}{\sqrt{2\pi}} \left( \frac{n}{n-mr}\right)^{\!n-mr}\sqrt{\frac{n}{n-mr}}\;\,e^{-mr} \;\ell_n > \alpha > 0,$$ for $n$ large enough.
\end{proof}

\begin{proposition}\label{mainine}
Fix non-negative integers
%$\epsilon >0 $ and let $r=\lfloor z(n,p)-\epsilon\rfloor$. Then, for 
$k$ and $m$ with $m>0$. Then
$$
\lim_{n\to\infty}\left(r^{-k}\binom{n}{\,\underbrace{r,\ldots,r}_m\,}p^{m\binom{r}{2}}\right)= \infty.
$$
\end{proposition}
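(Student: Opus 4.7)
The plan is to exploit the factorization
$$
\binom{n}{\,r,\ldots,r\,} \;=\; \binom{n}{mr}\binom{mr}{\,r,\ldots,r\,}
$$
(with $m$ blocks of size $r$) and to bound each factor by a Stirling-type estimate. First I would apply Lemma~\ref{c_n} to $\binom{n}{mr}$, giving $\binom{n}{mr} \geq \alpha(m)(n/(mr))^{mr}\,e^{mr}(mr)^{-1/2}$ for $n$ large enough. A parallel Stirling computation (identical in spirit to the one inside the proof of Lemma~\ref{c_n}) applied to $\binom{mr}{r,\ldots,r} = (mr)!/(r!)^m$ yields a lower bound of the form $\beta(m)\, m^{mr}\, r^{(1-m)/2}$ for some $\beta(m)>0$ and $n$ large enough. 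Multiplying, and using the key algebraic cancellation $m^{mr}(n/(mr))^{mr} = (n/r)^{mr}$, I arrive at
$$
r^{-k}\binom{n}{\,r,\ldots,r\,}\,p^{m\binom{r}{2}} \;\geq\; \gamma(m)\,(n/r)^{mr}\,e^{mr}\,q^{-m\binom{r}{2}}\,r^{-\delta}
$$
for some $\gamma(m)>0$ and a fixed real $\delta=\delta(k,m)$, where $q=1/p$.

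The second step is to take $\log_q$ of the right-hand side. The dominant contribution is
$$
mr\bigl[\log_q n - \log_q r + \log_q e - (r-1)/2\bigr],
$$
while all remaining terms (the constant $\gamma(m)$ and the factor $r^{-\delta}$) contribute only $O(\log r) = O(\log\log n)$ and are negligible compared with $mr$.

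Finally, I would substitute $r = z - \epsilon + O(1)$ together with $\log_q r = \log_q\log_q n + \log_q 2 + o(1)$. Using the explicit form of $z$, the identity $(z-1)/2 = \log_q n - \log_q\log_q n + \log_q(e/2)$ leads after direct cancellation to
$$
\log_q n - \log_q r + \log_q e - (r-1)/2 \;=\; \epsilon/2 + o(1),
$$
so the whole logarithm grows like $mr(\epsilon/2 + o(1))$, which tends to $+\infty$ because $r\to\infty$ and $\epsilon>0$ is fixed. The delicate point is precisely this last cancellation: without retaining the combinatorial factor $\binom{mr}{r,\ldots,r}$ one only obtains a bracket $\epsilon/2 - \log_q m$, which is negative for $m\geq 2$ and small $\epsilon$ (since $q>1$). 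It is the combined use of Lemma~\ref{c_n} and the Stirling lower bound on the multinomial correction that produces the sharp constant $\epsilon/2$ and thereby guarantees the divergence.
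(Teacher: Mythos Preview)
Your approach is essentially identical to the paper's: the same factorization $\binom{n}{r,\ldots,r}=\binom{n}{mr}\cdot(mr)!/(r!)^m$, the same use of Lemma~\ref{c_n} together with a Stirling estimate on the multinomial correction $(mr)!/(r!)^m$, and the same endgame reducing everything to $r^{-\mathrm{const}}\,q^{\epsilon r/2}\to\infty$. The only imprecision is in the last substitution: since $r=\lfloor z-\epsilon\rfloor$ yields $r\le z-\epsilon$ (not $r=z-\epsilon+o(1)$), your bracket satisfies $\ge \epsilon/2+o(1)$ rather than $=\epsilon/2+o(1)$---but that inequality is exactly the direction you need, and it is how the paper proceeds (it uses $r\le z-\epsilon$ at the outset to lower-bound $p^{m\binom{r}{2}}$).
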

\begin{proof}
Recall $r=\lfloor z-\epsilon\rfloor\leq z-\epsilon$, so
\begin{center}
$\begin{array}{ll}
p^{m\binom{r}{2}}\;\;\geq\;\; \left(p^{\frac{z-\epsilon-1}{2}}\right)^{mr}&=\;\;\left(p^{\log_qn-\log_q\log_qn+\log_q(e/2)-\frac{\epsilon}{2}}\right)^{mr}\\
&=\;\;\Big( \frac{2C \log_qn}{en}\Big)^{mr}
\end{array}$
\end{center}
where $C=q^{\frac{\epsilon}{2}} >1.$ Note also that 
$$
\binom{n}{\underbrace{r,\ldots,r}_m}=\frac{n!}{r!^m(n-mr)!}=\binom{n}{mr}\frac{(mr)!}{(r!)^m}.
$$
By Lemma \ref{c_n}, \redd{there is a positive real number $\alpha(m)$ and a large positive integer $N(m)$ so that} 
$$
\binom{n}{mr}=c_n\left(\frac{n}{mr}\right)^{mr}e^{mr}(mr)^{-1/2}
$$ 
\red{holds with $c_n\geq \redd{\alpha(m)}>0$ for \redd{$n\geq N(m)$.}} \redd{For such large values of $n$ we then have}
\begin{eqnarray}\label{f2}
r^{-k}\binom{n}{\underbrace{r,\ldots,r}_m}p^{m\binom{r}{2}} &\geq & r^{-km}c_n \left(\frac{n}{mr}\right)^{mr}e^{mr}(mr)^{-1/2}\frac{(mr)!}{r!^m}\left(\frac{2C\log_q n}{en}\right)^{mr}\nonumber\\
&=&m^{-1/2}r^{-km}r^{-\frac{1}{2}}c_n \frac{(mr)!}{r!^mm^{mr}}\left(\frac{2C\log_q n}{r}\right)^{mr}.
\end{eqnarray}
Using Stirling's formula~(\ref{stirlingformula}), we get
$$
\frac{(mr)!}{r!^m m^{mr}}=\frac{\sqrt{2\pi mr\,}\left( \frac{mr}{e}\right)^{mr}}{\sqrt{2\pi r\,}^{\,m} \left(\frac{r}{e}\right)^{mr}m^{mr}}\,d_n=\frac{\sqrt{2\pi mr}\,}{\sqrt{2\pi r\,}^{\,m}}\,d_n
$$
where $d_n =\redd{e^{\alpha_{mr}}/e^{m\alpha_r}} \rightarrow 1$ as $n \rightarrow \infty.$ Therefore, we can rewrite~(\ref{f2}) as
\begin{eqnarray*}
r^{-k}\binom{n}{\underbrace{r,\ldots,r}_m}p^{m\binom{r}{2}}& \geq & m^{-1/2}r^{-km}r^{-\frac{1}{2}}\frac{\sqrt{2\pi mr\,}}{\sqrt{2\pi r\,}^{\,m} }\left(\frac{2C\log_q n}{r}\right)^{mr}c_n d_n \\
&=&\red{\left(2\pi\right)}^{\,\frac{1-m}{2}}r^{-km}r^{-\frac{m}{2}}\left(\frac{2C\log_q n}{r}\right)^{mr}c_n d_n \\
&=&\red{\left(2\pi\right)}^{\frac{\,1-m}{2}}\left[r^{-\frac{2k+1}{2}}\left(\frac{2C\log_q n}{r}\right)^{r}\right ]^mc_n d_n\;\geq\;\red{\left(2\pi\right)^{\frac{\,1-m}{2}}}\left[r^{-\frac{2k+1}{2}}C^{r}\right ]^mc_n d_n
\end{eqnarray*}
\redd{for $n\geq N(m)$,} where \red{the last inequality holds for $n$ large enough \redd{(condition that can be incorporated by increasing $N(m)$ if needed)} in view of the definition of $r$.} The proof is complete in view of the noted characterization of the sequences $\{c_n\}$ and $\{d_n\}$, and since $r^{-\frac{2k+1}{2}}C^r$ tends to infinity as $n \rightarrow \infty$, for
$$
\log_q(r^{-\frac{2k+1}{2}}C^{r})=\frac{\epsilon}{2}r-\left(\frac{2k+1}{2}\right)\log_q r =\frac{\epsilon}{2}r-\left(\frac{2k+1}{\red{2 \ln q}}\right)\red{\ln r}
$$
tends to infinity as $n\to\infty$.
\end{proof}

Next step toward the proof of~(\ref{sumTA}) is an analysis of the asymptotic behavior of $T_A$ for certain matrices $A \in D-\{A_0\}$. \red{In more detail, recall that the set $D$ depends on~$n$. Using subindices to stress the dependence, we have $D_1\subseteq D_2\subseteq D_3\subseteq\cdots$. In Proposition~\ref{matrix1r} below we will be concerned with sequences of matrices $\{A_n\in D_n\}_{n\geq1}$ whose only non-zero entries lie on the main diagonal and are either 1 or $r$. Such a sequence $\{A_n\in D_n\}_{n\geq1}$ as above will simply be referred to as a \emph{diagonal sequence} and, by abuse of notation, will be denoted by $A\in D$. In addition, by a diagonal sequence $A\in D-\{A_0\}$ we mean one for which no $A_n$ is the zero matrix.}

\begin{proposition}\label{matrix1r}
Any diagonal sequence $A\in \red{D-\{A_0\}}$ satisfies $Q(r)\hspace{.4mm}T_A=o(1)$ for any polynomial $Q$ with real coefficients.
\end{proposition}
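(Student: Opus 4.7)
\medskip
\textbf{Plan of proof.} Write $k$ for the number of diagonal entries of $A$ equal to $r$ and $\ell$ for the number equal to $1$, so that $(k,\ell)\neq(0,0)$ and $k+\ell\leq s$. Since any polynomial $Q$ is a linear combination of monomials, it suffices to show $r^{j}T_{A}=o(1)$ for each fixed non-negative integer $j$. The starting point is to unpack $T_{A}$ combinatorially: direct inspection gives $L(A)=k\binom{r}{2}$ and $\prod_{i}\binom{r}{A^{i}}=r^{\ell}$, and the surviving multinomial $\binom{n-sr}{r-\Sigma A_{1},\ldots,r-\Sigma A_{s}}$ simplifies because each $r-\Sigma A_{i}$ belongs to $\{0,r-1,r\}$. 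The resulting cancellations produce the closed form
$$T_{A}=\frac{r^{2\ell}\,(r!)^{k}\,(n-sr)!^{2}}{(n-2sr+kr+\ell)!\,n!}\;q^{k\binom{r}{2}}.$$

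If $k=0$ then necessarily $\ell\geq 1$, the factor $(r!)^{k}q^{k\binom{r}{2}}$ collapses to $1$, and a telescoping estimate of the remaining factorial ratio---of exactly the type used in the proof of Lemma~\ref{auxil}---gives $T_{A}\leq C\,r^{2\ell}/n^{\ell}$ for some constant $C>0$ and large $n$. Since $r=O(\log n)$, the polynomial factor $r^{j+2\ell}$ is dwarfed by $n^{\ell}$, and hence $r^{j}T_{A}=o(1)$.

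The substantive case is $k\geq 1$, where Proposition~\ref{mainine} enters. The key algebraic move is to absorb $(r!)^{k}q^{k\binom{r}{2}}$ using the identity
$$(r!)^{k}\,q^{k\binom{r}{2}}=\frac{n!}{\binom{n}{\,\underbrace{\mbox{\scriptsize$r,\ldots,r$}}_{k}\,}\,p^{k\binom{r}{2}}\,(n-kr)!},$$
which follows at once from the definition of the multinomial coefficient. Substituting this into the closed form for $T_{A}$ and cancelling an $n!$ gives
$$r^{j}T_{A}=\frac{r^{j+2\ell}\,(n-sr)!^{2}}{(n-2sr+kr+\ell)!\,(n-kr)!}\cdot\frac{1}{\binom{n}{\,\underbrace{\mbox{\scriptsize$r,\ldots,r$}}_{k}\,}\,p^{k\binom{r}{2}}}.$$
The first factor can be bounded above by a constant multiple of $n^{-\ell}$ via a telescoping argument whose critical input is $\bigl(n/(n-sr)\bigr)^{sr}=O(1)$, exactly as in the proof of Lemma~\ref{auxil}. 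The second factor is $o(1)$ by Proposition~\ref{mainine}, which says precisely that $\binom{n}{\,\underbrace{\mbox{\scriptsize$r,\ldots,r$}}_{k}\,}p^{k\binom{r}{2}}$ grows faster than any polynomial in~$r$.

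The step I anticipate as the main obstacle is the algebraic rewriting that cleanly exposes the denominator $\binom{n}{\,\underbrace{\mbox{\scriptsize$r,\ldots,r$}}_{k}\,}p^{k\binom{r}{2}}$ inside~$T_{A}$: getting the cancellation of the stray $n!$ and of one power of $(n-sr)!$ so that what remains is a factorial ratio of size $O(n^{-\ell})$. Once this isolation is in place, Proposition~\ref{mainine}---which is precisely tailored to such expressions---does all the heavy lifting; a minor uniformity check is that the constant in the factorial-ratio bound can be chosen independently of the matrices $A$ under consideration, which is immediate because $k,\ell\leq s$.
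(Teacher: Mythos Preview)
Your proposal is correct and follows essentially the same route as the paper. In the paper's notation $m=k$ and $m'=\ell$, and the paper derives exactly your closed form
\[
T_A=\left[\binom{n}{\underbrace{r,\ldots,r}_{m}}p^{m\binom{r}{2}}\right]^{-1}\frac{r^{2m'}(n-sr)!^2}{(n-mr)!(n-2sr+mr+m')!},
\]
bounds the factorial ratio by $r^{2m'}/(n-2sr)^{m'}\leq 2^{m'}r^{2m'}/n^{m'}$, and then invokes Proposition~\ref{mainine} when $m\geq1$ (the case $m=0$ being trivial). The uniformity remark you make at the end---that $k,\ell$ range over the finite set $\{0,\ldots,s\}$---is also the paper's closing observation. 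One small expository slip: when you write ``the first factor can be bounded above by a constant multiple of $n^{-\ell}$'' you presumably mean the \emph{factorial ratio} alone, since the stray power $r^{j+2\ell}$ in that factor is of course unbounded; it is absorbed (as you correctly note) by the faster-than-polynomial growth supplied by Proposition~\ref{mainine}.
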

\begin{proof}
For each $n\geq1$, let $m=m(n)$ and $m'=m'(n)$ be the integers in $\{0,1,\ldots,s\}$ such that $A_n$ has $m$ entries with value $r$ and $m'$ entries with value $1$ (all of these in the main diagonal of $A_n$). In these terms, the generic $T_A$ is
\begin{eqnarray*}
T_A&=&\frac{r^{m'}\binom{n-sr}{\ \hspace{-2.2mm}\underbrace{\mbox{\scriptsize$r,\ldots,r\,$}}_{s-(m+m')},\,\underbrace{\mbox{\scriptsize $r-1,\ldots,r-1\,$}}_{m'}}}{\binom{n}{\underbrace{\mbox{\scriptsize $\,r,\ldots,r\,$}}_s}}\, q^{m\binom{r}{2}} %\\
%&=&\frac{r^{m'}(n-sr)!(r!)^s(n-sr)!q^{m\binom{r}{2}}}{n!r!^{s-(m+m')}(r-1)!^{m'}(n-sr-(s-(m+m'))r -m'(r-1))!}\\
\;\;=\;\;\frac{r^{m'}(n-sr)!r!^s(n-sr)!q^{m\binom{r}{2}}}{n!r!^{s-(m+m')}(r-1)!^{m'}(n-2sr+mr +m')!}\\
&=&\frac{r^{2m'}(n-sr)!r!^s(n-sr)!q^{m\binom{r}{2}}}{n!r!^{s-m}(n-2sr+mr +m')!}\;\;=\;\;\frac{r^{2m'}(n-sr)!r!^m(n-sr)!q^{m\binom{r}{2}}}{n!(n-2sr+mr +m')!}\\
&=&\left[\binom{n}{\underbrace{r,\ldots,r}_m}p^{m\binom{r}{2}}\right]^{-1}\!\!\!\frac{r^{2m'}(n-sr)!(n-sr)!}{(n-mr)!(n-2sr+mr +m')!}.
\end{eqnarray*}
\red{The multinomial coefficient $\binom{n-sr}{\,r,\ldots,r,r-1,\ldots,r-1\hspace{.3mm}}$ in the first line of the above equalities should be ignored if $m=s$ and $m'=0$. Likewise, the multinomial coefficient $\binom{n}{r,\ldots,r}$ in the last line of the above equalities should be ignored if $m=0$.} Note that
\begin{eqnarray*}
\frac{r^{2m'}(n-sr)!(n-sr)!}{(n-mr)!(n-2sr+mr +m')!}&=&\frac{r^{2m'}(n-sr)!}{(n-sr+1)\cdots(n-mr)(n-2sr+mr +m')!}\\
&\leq &\frac{r^{2m'}(n-2sr+mr)!}{(n-2sr+mr+m')!}\;\;\leq\;\;\frac{r^{2m'}}{(n-2sr)^{m'}}.
\end{eqnarray*}
Thus, for $n$ large enough,
$$T_A\leq\left[\binom{n}{\underbrace{r,\ldots,r}_m}p^{m\binom{r}{2}}\right]^{-1}\frac{r^{2m'}}{(n/2)^{m'}}=\left[\binom{n}{\underbrace{r,\ldots,r}_m}p^{m\binom{r}{2}}\right]^{-1}\frac{2^{m'}r^{2m'}}{n^{m'}},$$
and the desired conclusion follows from Proposition~\ref{mainine} \red{if $m>0$, whereas the conclusion is obvious if $m=0$ for, then, $m'$ is positive. Note that Proposition~\ref{mainine} has to be applied for each possible value of $(m,m')$, but this is not a problem as there are at most $(s+1)^2$ such pairs.}
\end{proof}

Choose $\lambda$ with
$$0<\lambda<\frac{1}{1+2seq}$$
and consider the partition of $\{0,1,\ldots,r\}$ into the three sets
\begin{eqnarray*}
S_\lambda &=&\{x\in \mathbb{Z}\;\colon\hspace{.2mm}0\leq x \leq (1-\lambda)\log_q n\},\\
I_\lambda &=&\{x\in \mathbb{Z}\;\colon\hspace{.2mm}(1-\lambda)\log_q n< x < (1+\lambda)\log_q n\}, \mbox{ \ and}\\
L_\lambda &=&\{x\in \mathbb{Z}\;\colon\hspace{.2mm}(1+\lambda)\log_q n\leq x \leq r\}.
\end{eqnarray*}
An integer will be referred as \emph{short}, \emph{intermediate}, or \emph{large}, depending on whether it lies in $S_\lambda$, $I_\lambda$, or $L_\lambda$, respectively.

\medskip
Propositions~\ref{teorsmalllarge}--\ref{finalprop} below will enable us to bound from above each term in~(\ref{sumTA}) by a term $T_A$ for a suitable diagonal matrix $A$ as those in Proposition~\ref{matrix1r}.
 
 \begin{proposition}\label{teorsmalllarge}
\redd{There is a large integer $N$ (which depends only on the fixed parameters $s$, $\epsilon$, $p$, and $\lambda$) such that, for $n\geq N$:}
\begin{itemize}
  \item [i)] If $A'\in D$ arises by adding $1$ to a small entry in $A\in D$, then $T_{A'}<T_A$.
  \item [ii)] If $A'\in D$ arises by adding $1$ to a large entry in $A\in D$, then $T_A<T_{A'}$.
\end{itemize}
\end{proposition}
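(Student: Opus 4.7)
The natural approach is to form the ratio $T_{A'}/T_A$ explicitly and compare it with $1$. If $A'$ arises from $A$ by replacing $a_{ij}$ with $a_{ij}+1$, then cancellation in the multinomial coefficients appearing in $F_A$, together with the change in $L(A)$, yields
$$
\frac{T_{A'}}{T_A}=\frac{(r-\Sigma A^j)(r-\Sigma A_i)}{(a_{ij}+1)(n-2sr+S+1)}\,q^{a_{ij}},
$$
where $S=\sum_{k,\ell}a_{k\ell}$. Indeed, the column multinomial $\binom{r}{A^j}$ contributes a factor of $(r-\Sigma A^j)/(a_{ij}+1)$; the outer multinomial $\binom{n-sr}{r-\Sigma A_1,\ldots,r-\Sigma A_s}$ contributes a factor of $(r-\Sigma A_i)/(n-2sr+S+1)$ via the identity $\sum_k(r-\Sigma A_k)=sr-S$; and $L(A')-L(A)=\binom{a_{ij}+1}{2}-\binom{a_{ij}}{2}=a_{ij}$ accounts for $q^{a_{ij}}$. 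Admissibility $A'\in D$ forces $\Sigma A^j\leq r-1$ and $\Sigma A_i\leq r-1$ in $A$, so both numerator factors lie in $[1,r]$.

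For part (i), the hypothesis $a_{ij}\leq (1-\lambda)\log_q n$ yields $q^{a_{ij}}\leq n^{1-\lambda}$, while $r=O(\log n)$ gives $n-2sr+S+1\geq n/2$ for $n$ large. Hence
$$
\frac{T_{A'}}{T_A}\leq\frac{r^2\cdot n^{1-\lambda}}{n/2}=\frac{2r^2}{n^\lambda}\xrightarrow[n\to\infty]{}0,
$$
so $T_{A'}<T_A$ once $n$ exceeds a threshold depending only on $s,\epsilon,p,\lambda$.

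For part (ii), the hypothesis $a_{ij}\geq(1+\lambda)\log_q n$ yields $q^{a_{ij}}\geq n^{1+\lambda}$; the numerator of the ratio is at least $1\cdot 1\cdot n^{1+\lambda}$, while the denominator is at most $r\cdot n$ (using $a_{ij}+1\leq r$ and $S\leq sr$, whence $n-2sr+S+1\leq n$ for $n$ large). Therefore
$$
\frac{T_{A'}}{T_A}\geq\frac{n^\lambda}{r}\xrightarrow[n\to\infty]{}\infty,
$$
so $T_{A'}>T_A$ for $n$ sufficiently large. Every estimate is uniform in $A\in D$, so a single threshold $N=N(s,\epsilon,p,\lambda)$ works for both parts simultaneously. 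No genuine obstacle is expected: the only delicate point is checking that the admissibility condition $A'\in D$ forces $r-\Sigma A^j\geq 1$ and $r-\Sigma A_i\geq 1$, so that the numerator factors never collapse. The whole argument is powered by the contrast between $r=O(\log n)$ and the polynomial-in-$n$ gap $n^\lambda$ separating the regimes $q^{a_{ij}}\leq n^{1-\lambda}$ and $q^{a_{ij}}\geq n^{1+\lambda}$, which is exactly why the short/intermediate/large partition at the threshold $(1\pm\lambda)\log_q n$ was chosen.
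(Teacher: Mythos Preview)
Your proof is correct and follows essentially the same route as the paper: both compute the explicit ratio $T_{A'}/T_A=\dfrac{(r-\Sigma A^j)(r-\Sigma A_i)}{(a_{ij}+1)(n-2sr+S+1)}\,q^{a_{ij}}$, then bound the numerator by $r^2$ and the denominator from below by $n/2$ in case~(i), and bound the numerator from below by $1$ and the denominator from above by $r\cdot n$ (equivalently $2n\log_q n$) in case~(ii), using throughout that $r=O(\log n)$ versus the $n^{\pm\lambda}$ gap coming from $q^{a_{ij}}$. The only cosmetic difference is that the paper packages the non-exponential factor as an auxiliary quantity $B$ and records the two-sided inequality $Bq^a\leq T_{A'}/T_A\leq 2Bq^a$, whereas you bound the ratio directly.
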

\begin{proof}
Suppose $A'\in D$ arises by increasing by $1$ an entry $a_{ij}=a$ in $A \in D$ (in particular $\Sigma A_i<r$ and $\Sigma A^j<r$). Then
$$\frac{T_{A'}}{T_A}=\frac{(r-\Sigma A^j)(r-\Sigma A_i) \, q^a}{(a+1)\left(n-2sr+\displaystyle{\sum_{k=1}^s} \Sigma A^k +1 \right)}$$
 Since $r=\red{o(n)}$, we have
$$
\frac{n}{2}\leq n-2sr+\displaystyle{\sum_{k=1}^s} \Sigma A^k +1 \leq n
$$
for $n$ large enough \redd{(depending only on $s$, $\epsilon$, and $p$), so that for those large values of $n$ we have}
 \begin{equation}\label{Bq^a}
  B q^a\leq\frac{T_{A'}}{T_A}\leq 2B q^a
 \end{equation}
 where
 \begin{equation}\label{B}
B=\frac{(r-\Sigma A^j)(r- \Sigma A_i)}{(a+1)n} \leq \frac{r^2}{n}.
\end{equation}

\smallskip\noindent
{\bf Case $a\in S_\lambda$}: We have
$q^a \leq q^{(1-\lambda)\log_q n}=n^{1-\lambda}$, so that
$$Bq^a\leq \frac{r^2n^{1-\lambda}}{n}=\frac{r^2}{n^{\lambda}}.$$
But $\displaystyle{\lim_{n\rightarrow\infty}} (r^2/n^{\lambda})=0$, so that the second inequality in~(\ref{Bq^a}) gives $T_{A'}<T_A$ for $n$ large enough \redd{(depending now on $s$, $\epsilon$, $p$, and $\lambda$).}

\medskip\noindent
{\bf Case $a\in L_\lambda$}: Now $q^a\geq n^{1+\lambda}$ \red{and, since $a+1\leq r<2\log_qn$ for $n$ large enough \redd{(depending only on $s$, $\epsilon$, and $p$), we have}}
$$B\geq\frac{1}{(a+1)n}\geq\frac{1}{2n\log_q n}.$$
Therefore
$$
B q^a\geq \frac{n^{1+\lambda}}{2n\log_q n}=\frac{n^{\lambda}}{2\log_q n}.
$$
This time the quotient $n^\lambda/(2\log_qn)$ tends to $\infty$ as $n\to\infty$, so that the first inequality in~(\ref{Bq^a}) gives $T_{A'}>T_A$ for $n$ large enough \redd{(depending now on $s$, $\epsilon$, $p$, and $\lambda$).}
\end{proof}

\begin{proposition}\label{propoinlar}
\redd{There is a large integer $N$ (which depends only on the fixed parameters $s$, $\epsilon$, $p$, and $\lambda$) such that, for $n\geq N$, the following assertion holds:} If $A'\in D$ arises by increasing by $1$ some entry $a_{ij}=a$ in $A\in D$ with $0<a \leq r/2$, then $T_A>T_{A'}$ for $n$ large enough provided the following two conditions hold:
\begin{enumerate}[(i)]
\item All small entries \red{in $A_i$ and in $A^j$} are zero.\vspace{-1mm}

\item There exists either an entry $a_{ij'}\neq0$ with $j'\neq j$, or an entry $a_{i'j}\neq0$ with $i'\neq i$.
\end{enumerate}
\end{proposition}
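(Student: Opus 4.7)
The plan is to start, as in the proof of Proposition~\ref{teorsmalllarge}, from the identity
$$
\frac{T_{A'}}{T_A}=\frac{(r-\Sigma A^j)(r-\Sigma A_i)\,q^a}{(a+1)\left(n-2sr+\sum_{k}\Sigma A^{k}+1\right)}
$$
and to show that, under hypotheses (i) and (ii), this ratio is dominated by a constant strictly less than $1$ for $n$ large. The denominator is already $\geq n/2$ asymptotically (as in Proposition~\ref{teorsmalllarge}), so the real task is to squeeze the numerator by combining the two conditions.

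First I would use condition~(i) to pin down the size of $a$: since $a=a_{ij}>0$ lies in both $A_i$ and $A^j$, it cannot belong to $S_\lambda$, hence $a>(1-\lambda)\log_q n$. Combined with the hypothesis $a\leq r/2$ and the definition of $r=\lfloor z-\epsilon\rfloor$, this gives $q^a\leq q^{r/2}\leq q^{(z-\epsilon)/2}$, a quantity of order $n/\log_q n$ whose multiplicative constant carries a factor $q^{-\epsilon/2}$. The key step is then to exploit condition~(ii). Since the formula is symmetric in $r-\Sigma A_i$ and $r-\Sigma A^j$, one may assume WLOG that there is an entry $a_{ij'}\neq 0$ with $j'\neq j$; condition~(i) forces $a_{ij'}>(1-\lambda)\log_q n$, whence
$$
\Sigma A_i\geq a+a_{ij'}>a+(1-\lambda)\log_q n,
$$
and, using $r<2\log_q n$ together with $a>(1-\lambda)\log_q n$, one obtains the crucial squeeze $r-\Sigma A_i<2\lambda\log_q n$. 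The partner factor $r-\Sigma A^j$ is handled by the trivial bound $\leq r<2\log_q n$.

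Assembling these estimates with $a+1>(1-\lambda)\log_q n$ in the denominator collapses the ratio to
$$
\frac{T_{A'}}{T_A}<\frac{4e\sqrt{q}}{q^{\epsilon/2}}\cdot\frac{\lambda}{1-\lambda}.
$$
The threshold $\lambda<1/(1+2seq)$ fixed before Proposition~\ref{teorsmalllarge} was chosen precisely so that $\lambda/(1-\lambda)<1/(2seq)$, whereupon the right-hand side drops to at most $2/(sq^{(1+\epsilon)/2})$, which is $<1$ since $s\geq 2$ and $q>1$. The main obstacle I anticipate is the squeeze $r-\Sigma A_i<2\lambda\log_q n$: one must notice that condition~(i) is applied to the \emph{auxiliary} entry $a_{ij'}$ produced by condition~(ii) (not to $a$ itself, which is already intermediate) and that the two contributions $a$ and $a_{ij'}$ together fill up $\Sigma A_i$ to within $2\lambda\log_q n$ of $r$; without both hypotheses one would be stuck with an $O(\log_q n)$ factor in the numerator that the threshold on $\lambda$ cannot absorb.
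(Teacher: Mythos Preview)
Your proof is correct and follows essentially the same route as the paper's: both start from the ratio identity, bound $(r-\Sigma A_i)(r-\Sigma A^j)\leq 4\lambda\log_q^2 n$ via conditions~(i)--(ii), bound $q^a$ using $a\leq r/2$, bound $a+1>(1-\lambda)\log_q n$ via~(i), and assemble to a constant $<1$ using the threshold on $\lambda$. The only difference is cosmetic: you keep the factor $q^{-\epsilon/2}$ in the estimate for $q^a$ (yielding $\frac{4e\sqrt{q}}{q^{\epsilon/2}}\cdot\frac{\lambda}{1-\lambda}$), while the paper drops it and obtains the slightly coarser $\frac{4eq\lambda}{1-\lambda}$; both are $<1$ once $s\geq2$ and $\lambda<1/(1+2seq)$.
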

\begin{proof}
\red{Let $B$ be defined as in~(\ref{B}) so that~(\ref{Bq^a}) applies if $n$ is large enough. The fact that $r<2\log_qn$ (for large enough $n$'s) together with \emph{(i)} and \emph{(ii)} yield}
$$
(r-\Sigma A_i)(r-\Sigma A^j)\leq(r-\red{2}(1-\lambda) \log_q n)\,\red{r}\leq\red{(2\log_qn-\red{2}(1-\lambda) \log_q n) \, 2\log_qn}\leq\red{4} \lambda \log^2_q n.
$$
%\begin{eqnarray*}
%(r-\Sigma A_i)(r-\Sigma A^j)&\leq& (r-\red{2}(1-\lambda) \log_q n)\,\red{r}\\
%&\leq& \red{(2\log_qn-\red{2}(1-\lambda) \log_q n) \, 2\log_qn} \\
%&\leq& \red{4} \lambda \log^2_q n.
%\end{eqnarray*}
Moreover, since $a \leq r/2\leq\red{\log_qn-\log_q\log_qn+\log_q(e/2)+1}$, we have
$$
q^a\leq \frac{eqn}{2\log_qn}.%\frac{n}{\log_qn}.
$$
Since $a+1\geq (1-\lambda)\log_q n$, the previous considerations amount to
$
2Bq^a\leq \frac{\red{4}\lambda e q}{1- \lambda}<1
$
where the last inequality comes from the definition of $\lambda$. The desired conclusion then follows from (\ref{Bq^a}).
\end{proof}

\begin{proposition}\label{finalprop}
\redd{There is a large integer $N$ (which depends only on the fixed parameters $s$, $\epsilon$, $p$, and $\lambda$) such that, for $n\geq N$:} If $A=(a_{i,j})$ is a matrix in $D$ with a non-zero entry $a_{ij}=a$ such that $a_{ij'}=a_{i'j}=0$ whenever $i\neq i'$ and $j\neq j'$, then $T_A \leq \max \{T_{A'}, T_{A''}\}$. Here $A'\in D$ is the matrix whose $(i,j)$-entry is $1$, whereas all its remaining entries agree with the corresponding entries of~$A$. Likewise, $A'' \in D$ is the matrix whose $(i,j)$-entry is $r$, whereas all its remaining entries agree with the corresponding entries of $A$.
\end{proposition}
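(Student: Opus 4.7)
The approach is to treat $T_A$ as a function of the single entry $a_{ij}$ while keeping the other entries of $A$ fixed, and show that this function attains its maximum on $\{1,\ldots,r\}$ at one of the endpoints $a_{ij}=1$ or $a_{ij}=r$. Concretely, for each $a\in\{0,1,\ldots,r\}$ let $A(a)$ denote the matrix obtained by replacing the $(i,j)$-entry of $A$ by $a$. The hypothesis that $a_{ij}$ is the only non-zero entry in its row and column of $A$ forces the $i$-th row and $j$-th column of $A(a)$ to sum to $a$, so $A(a)\in D$ for every such $a$. Writing $\tau(a):=T_{A(a)}$, we have $A'=A(1)$, $A''=A(r)$, and $A=A(a)$, so it suffices to show $\tau(a)\leq\max\{\tau(1),\tau(r)\}$ for every $a\in\{1,\ldots,r\}$.

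For $a\in S_\lambda$, the whole chain $\{1,\ldots,a\}$ lies in $S_\lambda$, and iterating Proposition~\ref{teorsmalllarge}(i) along it yields $\tau(a)\leq\tau(1)$. Analogously, for $a\in L_\lambda$ the chain $\{a,\ldots,r\}$ lies in $L_\lambda$ and Proposition~\ref{teorsmalllarge}(ii) yields $\tau(a)\leq\tau(r)$. It remains to handle the intermediate range $a\in I_\lambda$.

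For that range I will prove a discrete log-convexity property of $\tau$, namely that the ratio $\rho(a):=\tau(a+1)/\tau(a)$ is strictly increasing on $I_\lambda$ for $n$ large enough. Specializing the expression for $T_{A'}/T_A$ obtained inside the proof of Proposition~\ref{teorsmalllarge} to our family (so that $\Sigma A(a)_i=\Sigma A(a)^j=a$, while $\sum_k\Sigma A(a)^k=a+S$ for some constant $S$ independent of $a$), one obtains
$$
\rho(a)=\frac{(r-a)^2\,q^a}{(a+1)\bigl(n-2sr+a+S+1\bigr)}.
$$
A direct computation of $\rho(a+1)/\rho(a)$ then yields the factor $q$ multiplied by the three correction factors $((r-a-1)/(r-a))^2$, $(a+1)/(a+2)$, and $(n-2sr+a+S+1)/(n-2sr+a+S+2)$, each of which tends to $1$ uniformly in $a\in I_\lambda$ because $r-a$, $a+1$, and $n-2sr+a+S+1$ all grow without bound with $n$ uniformly in such $a$. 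Hence $\rho(a+1)/\rho(a)\to q>1$ uniformly, so $\rho$ is strictly increasing on $I_\lambda$ for $n$ sufficiently large, $\log\tau$ is discretely convex on $I_\lambda$, and $\tau$ attains its maximum on $I_\lambda$ at one of the two endpoints $\min I_\lambda$ or $\max I_\lambda$.

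The conclusion then follows by gluing. Proposition~\ref{teorsmalllarge}(i) applied at the small value $\max S_\lambda=\min I_\lambda-1$, combined with the $S_\lambda$ case, gives $\tau(\min I_\lambda)\leq\tau(\max S_\lambda)\leq\tau(1)$. For the upper endpoint, a direct asymptotic check on the formula for $\rho$ (using $q^{\max I_\lambda}\geq n^{1+\lambda}/q$ against a denominator of order $n\log_q n$) yields $\rho(\max I_\lambda)\to\infty$, whence $\tau(\max I_\lambda)\leq\tau(\min L_\lambda)$, and the $L_\lambda$ case gives $\tau(\min L_\lambda)\leq\tau(r)$. The main technical point is the uniformity of the estimate $\rho(a+1)/\rho(a)\to q$ as $a$ varies over $I_\lambda$; this reduces to routine asymptotic manipulations using the bounds $(1-\lambda)\log_q n<a<(1+\lambda)\log_q n$ together with $r\sim 2\log_q n$ and $r=o(n)$.
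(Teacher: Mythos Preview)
Your proof is correct and follows essentially the same route as the paper: reduce to the intermediate range via Proposition~\ref{teorsmalllarge}, then establish discrete log-convexity of $a\mapsto T_{A(a)}$ on $I_\lambda$ by computing the second-order ratio $\rho(a+1)/\rho(a)$ and showing it tends uniformly to $q>1$. The only cosmetic difference is in the gluing at the right boundary: the paper remarks that the convexity estimate in fact holds ``two units to the right'' of $I_\lambda$ and then invokes Proposition~\ref{teorsmalllarge} directly, whereas you instead verify by hand that $\rho(\max I_\lambda)\to\infty$ to pass from $\max I_\lambda$ to $\min L_\lambda$; both arguments are equivalent.
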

\begin{proof}
We can assume $a\in I_\lambda\,$---otherwise the result follows by repeated used of Proposition~\ref{teorsmalllarge}. Let $B_\omega$ ($\omega=1,2$) arise by adding $\omega$ to the $(i, j)$-entry in $A$. Note that both $B_1$ and $B_2$ belong to $D$ if $n$ is large enough. Direct calculation yields
$$
\frac{T_{B_2}T_A}{T^2_{B_1}}=\left[\frac{(r-a-1)^2}{(r-a)^2}\cdot\frac{a+1}{a+2}\cdot\frac{n-2sr+\displaystyle{\sum_{k=1}^s} \Sigma A^k +1}{n-2sr+\displaystyle{\sum_{k=1}^s} \Sigma A^k+2}\right]\cdot q.
$$
\red{Each of the three quotients inside the bracket tends (uniformly on $a$) to 1 as $n\to\infty\,$---the first two because $a\in I_\lambda$. Since $q>1$, we get for large enough $n$ that $T_{B_2}T_A>T^2_{B_2}$ or, equivalently, that $\log_qT_A$ is a convex function on the interval $I_\lambda\,$---and even two units to the right of this open interval. The result now follows from Proposition~\ref{teorsmalllarge}.}
\end{proof}

Equation~(\ref{sumTA}) and, therefore, Theorem~\ref{randomTCs} now follow from Proposition~\ref{matrix1r}, Corollary~\ref{acotar} below, and the fact  that the size of $D-\{A_0\}$ increases (as $n\to\infty$) polynomially on $r$.

\begin{corollary}\label{acotar}
\redd{There is a large integer $N$ (which depends only on the fixed parameters $s$, $\epsilon$, $p$, and $\lambda$) such that, for $n\geq N$,} the term $T_A$ of any matrix $A \in D-\{A_0\}$ is bounded from above by a term $T_{A'}$ where $A'$ is a diagonal matrix in $D$ whose non-zero entries are either $1$ or $r$. \redd{(In general, the matrix $A'$ above depends on the given matrix $A$.)}
\end{corollary}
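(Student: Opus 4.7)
The plan is to transform $A \in D - \{A_0\}$ by a finite sequence of moves, each preserving or increasing $T$, into a diagonal matrix $A' \in D$ whose non-zero entries are either $1$ or $r$ (and with $A' \neq A_0$). The threshold $N$ will be the maximum of the finitely many thresholds supplied by Propositions~\ref{teorsmalllarge}, \ref{propoinlar}, and~\ref{finalprop}, each depending only on $s, \epsilon, p, \lambda$.

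First I would dispose of the edge case in which every non-zero entry of $A$ already lies in $S_\lambda$: pick one non-zero entry, use Proposition~\ref{teorsmalllarge}(i) iteratively to reduce it down to $1$ (stopping short of $0$) and to reduce every other small entry down to $0$, and then permute the surviving $1$ to the diagonal. In the generic case, Step~1 is to iterate Proposition~\ref{teorsmalllarge}(i) until every small entry of $A$ has been zeroed out, producing $A_1 \neq A_0$ with $T_A \leq T_{A_1}$ whose non-zero entries all lie in $I_\lambda \cup L_\lambda$.

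Step~2, the most delicate, turns $A_1$ into $A_2$ having at most one non-zero entry per row and per column. Consider a row $i$ of $A_1$ with two or more non-zero entries. The lower bound $a_{ij} > (1-\lambda)\log_q n$ on each non-zero entry, together with the row-sum bound $\Sigma A_i \leq r < 2\log_q n$ and the choice $\lambda < 1/(1+2seq) < 1/3$, forces exactly two non-zero entries in the row. Since $(1+\lambda)\log_q n > r/2$ for $n$ large, at most one of these entries belongs to $L_\lambda$; and since two entries both greater than $r/2$ would push the row-sum past $r$, at least one of the two entries lies in $I_\lambda$ with value $\leq r/2$. I would reduce such an entry to zero one unit at a time: while the decreased value remains in $I_\lambda$, Proposition~\ref{propoinlar} applies (condition~(i) holds thanks to Step~1, and~(ii) thanks to the other non-zero entry in the row); once the decreased value drops into $S_\lambda$, I would switch to Proposition~\ref{teorsmalllarge}(i) to finish the reduction. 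Iterating this for crowded rows, then crowded columns (by a symmetric argument, with the other entry in the column supplying~(ii)), yields $A_2$ with $T_{A_1} \leq T_{A_2}$.

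In Step~3, since $F_A$ and $L(A)$ are symmetric under simultaneous permutation of rows and columns, I would permute $A_2$ to place its non-zero entries on the main diagonal, obtaining $A_3$ with $T_{A_2} = T_{A_3}$. In Step~4, each non-zero entry of $A_3$ is isolated in its row and column, so Proposition~\ref{finalprop} applies to it; applying it successively to each non-zero entry (the others remaining unaffected and isolated) and choosing at each step the larger of the two options $T_{A'}, T_{A''}$ produces the desired $A'$, a diagonal matrix with non-zero entries in $\{1,r\}$ and $A' \neq A_0$, satisfying $T_A \leq T_{A'}$. I expect the main obstacle to be Step~2: specifically, verifying that the combinatorial analysis always yields an entry eligible for Proposition~\ref{propoinlar} (one in $I_\lambda$ with value $\leq r/2$), and correctly handling the transition from Proposition~\ref{propoinlar} to Proposition~\ref{teorsmalllarge}(i) as the entry being reduced crosses from $I_\lambda$ into $S_\lambda$.
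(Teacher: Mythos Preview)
Your proof is correct and follows essentially the same approach as the paper's: reduce small entries via Proposition~\ref{teorsmalllarge}(i), clear crowded rows and columns via Proposition~\ref{propoinlar}, permute to the diagonal (the paper invokes Remark~\ref{permutarcolumnas} and uses column permutations alone, which suffice and avoid any ambiguity in your phrase ``simultaneous permutation of rows and columns''), and finish with Proposition~\ref{finalprop}. Your Step~2 is in fact more careful than the paper's own argument, since you explicitly manage the handoff to Proposition~\ref{teorsmalllarge}(i) once the entry being lowered drops into $S_\lambda$---a detail the paper glosses over when it says the entry ``can be lowered down to zero'' using Proposition~\ref{propoinlar} alone.
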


\begin{remark}\label{permutarcolumnas}{\em
Before proving Corollary~\ref{acotar}, it is useful to note that, from its bare definition, the term $T_A$ does not change after permuting the columns of $A\in D$. \red{In other words, if $A^\sigma$ is obtained by permuting the columns of $A\in D$ according to a permutation $\sigma$, then the rule $$\left(\rule{0mm}{4mm}(W_1,\ldots,W_s),(W'_1,\ldots,W'_s)\right)\mapsto \left(\rule{0mm}{4mm}(W_{\sigma(1)},\ldots,W_{\sigma(s)}),(W'_1,\ldots,W'_s)\right)$$ sets a 1-1 correspondence between pairs in $\mathcal{W}(s)$ with intersection type $A$, and pairs in $\mathcal{W}(s)$ with intersection type $A^\sigma$. In particular we can assume without lost of generality that the matrix $A\in D-\{A_0\}$ in Corollary~\ref{acotar} has non-zero entries on its main diagonal.}
}\end{remark}

\begin{proof}[Proof of Corollary~\ref{acotar}]
\redd{The following arguments hold for values of $n$ large enough so that Propositions~\ref{teorsmalllarge}--\ref{finalprop} apply.} If all entries in $A$ are short, then by Proposition~\ref{teorsmalllarge} there exists a diagonal matrix $A'\in D$, with zeros and ones on its main diagonal, satisfying $T_A \leq T_{A'}$. So we can assume that $A$ has at least one entry which is either intermediate or large, and that such an entry lies on the main diagonal. In addition, using again Proposition~\ref{teorsmalllarge}, we can assume that all small entries in $A$ are zero.

\smallskip
At this point, if on a given row (or column) of $A$ there are two non-zero entries, then Proposition~\ref{propoinlar} implies that one of them (the one which is at most $r/2$) can be lowered down to zero at the price of increasing the value of $T_A$---which is all right for the purposes of this proof. We can thus assume that each row (as well as each column) of $A$ has at most one non-zero entry. By Remark~\ref{permutarcolumnas}, this amounts to assuming that $A$ is a diagonal matrix. The proof is then completed by Proposition~\ref{finalprop}.
\end{proof}

\section{Higher topological complexity}\label{sectcs}
We prove Theorem~\ref{randomtopTCs} in this section. In slightly more detail, the results in the previous section are now used to study the behavior of the higher topological complexity of Eilenberg Mac-Lane spaces of type $(G, 1)$, for random right angled Artin groups $G$ with a large number of generators. Due to the role of right angled Artin groups in the theory of graph braid groups~(\cite{MR2077673}), our results become most relevant in the collision-free motion planning problem of a large number of particles on graphs.

\medskip
We start by reviewing the relevant definitions and constructions.

\medskip
For an integer $s\geq2$, the \emph{$s$-th higher} (also referred to as \emph{sequential}) \emph{topological complexity} of a path connected space $X$, $\TC_{s}(X)$, is defined by Rudyak in~\cite{Ru10} as the reduced Schwarz genus of the fibration $$e_s=e^X_{s}:X^{J_{s}}\to X^{s}.$$ Here $J_s$ is the wedge sum of $s$ (ordered) copies of the interval $[0,1]$, where $0\in[0,1]$ is the base point for the wedge, and $$e_s(f_1, \ldots, f_s)=(f_1(1), \ldots, f_s(1)), \quad \quad \mbox{for}  \quad  (f_1, \ldots, f_s) \in X^{J_s},$$ is the map evaluating at the extremes of each interval. Thus $\TC_{s}(X)+1$ is the smallest cardinality of open covers $\{U_i\}_i$ of $X^s$ so that $e_s$ admits a (continuous!) section $\sigma_i$ on each $U_i$. The elements of such an open cover, $U_i$, are called {\it local domains}, the corresponding sections $\sigma_i$ are called {\it local rules}, and the resulting family of pairs $\{(U_i,\sigma_i)\}$ is called a {\it motion planner}. We say that such a family is an {\it optimal motion planner} if it has $\TC_{s}(X)+1$ local domains. This number is a generalization of the concept of topological complexity introduced by Farber in \cite{Far} as a model to study the continuity instabilities in the motion planning of an autonomous system (robot) whose space of configurations is $X$. The term ``higher'' (or ``sequential'') comes from the consideration of a series of prescribed stages in the robot, and not only of initial-final stages as in Farber's original concept.

\medskip
The homotopy invariance of the $\TC$-concepts is a central feature that has captured much attention from topologists in recent years. In particular, standard obstruction theory can be used to obtain a general upper bound for $\TC_s(X)$ in terms of $\hdim(X)$, the homotopy dimension of $X\,$---that is, the minimal dimension of CW complexes having the homotopy type of $X$.

\begin{proposition}[{\cite[Theorem~3.9]{bgrt}}]\label{ulbTCn}
For a $c$-connected space $X$ with $c\geq0$, $$\TC_s(X)\leq s\hdim(X)/(c+1).$$
\end{proposition}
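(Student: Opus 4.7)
The plan is to identify $\TC_s(X)$ with the Schwarz genus of the diagonal map $\Delta_s\colon X\to X^s$ and then apply the standard obstruction-theoretic upper bound on Schwarz genus in terms of fiber connectivity and base dimension.

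First, I would observe that evaluation at the wedge point of $J_s$ determines a homotopy equivalence $X^{J_s}\simeq X$ (since each interval in $J_s$ deformation-retracts to its base point), and that under this equivalence the fibration $e_s$ is identified with the diagonal inclusion $\Delta_s$. By the homotopy invariance of reduced Schwarz genus, $\TC_s(X)=\mathrm{genus}(\Delta_s)$, reducing the task to bounding $\mathrm{genus}(\Delta_s)$. Next, the homotopy fiber of $\Delta_s$ over $(x_1,\ldots,x_s)\in X^s$ consists of tuples $(z,\gamma_1,\ldots,\gamma_s)$ with $\gamma_i$ a path from $z$ to $x_i$; fixing $z=x_1$ and contracting $\gamma_1$ to a constant exhibits this fiber as homotopy equivalent to $(\Omega X)^{s-1}$, which is $(c-1)$-connected whenever $X$ is $c$-connected.

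The core step is Schwarz's criterion that $\mathrm{genus}(\Delta_s)\leq n$ holds as soon as the $(n+1)$-fold fiberwise join $\Delta_s^{*(n+1)}$ admits a section. The fiber of that iterated join is the ordinary join $((\Omega X)^{s-1})^{*(n+1)}$, whose connectivity is $(n+1)(c+1)-2$ by the standard estimate $\mathrm{conn}(A*B)=\mathrm{conn}(A)+\mathrm{conn}(B)+2$. Working with a CW model of $X$ of dimension $\hdim(X)$, so that $X^s$ is a CW complex of dimension at most $s\,\hdim(X)$, obstruction theory produces a section provided the fiber connectivity is at least $s\,\hdim(X)-1$. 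This rearranges to $n\geq (s\,\hdim(X)-c)/(c+1)$, a bound absorbed by the cleaner statement $\TC_s(X)\leq s\,\hdim(X)/(c+1)$.

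The main obstacle is the bookkeeping of connectivity through the iterated fiberwise join construction, together with properly handling the edge case $c=0$, in which the fiber is only known to be nonempty and the standard skeleton-by-skeleton cellular extension argument for $\Delta_s^{*(n+1)}$ must be applied directly rather than read off from the connectivity estimate.
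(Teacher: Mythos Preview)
The paper does not supply a proof of this proposition; it is quoted verbatim as \cite[Theorem~3.9]{bgrt} and used as a black box. Your sketch is the standard Schwarz-genus argument and is correct: identify $e_s$ with the diagonal $\Delta_s$, compute the homotopy fiber as $(\Omega X)^{s-1}$, pass to the $(n{+}1)$-fold fiberwise join, and invoke obstruction theory over a CW model of $X^s$ of dimension $s\,\hdim(X)$. This is precisely the route taken in the cited reference, so there is nothing to contrast.

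One minor remark on the bookkeeping: from $(n+1)(c+1)-2\geq s\,\hdim(X)-1$ you get $n\geq\bigl(s\,\hdim(X)+1\bigr)/(c+1)-1=(s\,\hdim(X)-c)/(c+1)$, and since $\TC_s(X)$ is an integer and the fractional part of $s\,\hdim(X)/(c+1)$ is at most $c/(c+1)$, the integer $\lfloor s\,\hdim(X)/(c+1)\rfloor$ already satisfies this inequality; so the ``absorption'' you mention is legitimate. Your caution about the case $c=0$ is well placed but harmless here: once $n\geq2$ the iterated join is simply connected and ordinary obstruction theory applies, while the residual low-dimensional cases are trivial.
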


The topological spaces we are interested in arise as follows: Let $\Gamma$ be a graph with vertex set $[n]$ and flag complex $\Delta_{\Gamma}$, i.e.~the abstract simplicial complex whose $(k-1)$-simplices are the \red{cardinality-$k$} subsets of $[n]$ corresponding to complete subgraphs of $\Gamma$. The right angled Artin group
$$
G_{\Gamma}=\langle v \in V \,\, ; \,\, vw=wv \,\, \text{if and only if}\,\, \{v, w\}\, \text{is and edge of}\,\, \Gamma \rangle
$$
is closely related to $\Delta_\Gamma$. To spell out the connection, let $S^1=e^0\cup e^1$ be the 1-dimensional sphere with its minimal cell decomposition. Consider the $n$-dimensional torus, $T^n=(S^{1})^{\times n}=\bigcup e_J$, with its (also minimal) product cell decomposition, where the cells $e_J$, indexed by subsets $J\subset [n]$, are given by
$$
e_{J}=\{(x_1, \ldots, x_n) \in T^n\,\, | \,\,x_i=e^0 \,\text{if and only if}\,\, i \notin J \}.
$$
Let $K_{\Gamma}$ be the subcomplex of $T^n$ that results by deleting the cells indexed by subsets not corresponding to simplices in $\Delta_{\Gamma}$. In other words, $K_{\Gamma}$ is the polyhedral product space determined by $\Delta_\Gamma$ and the based circle $(S^1,e^0)$. As shown in~\cite[Theorem~10]{MR567067}, $K_{\Gamma}$ is an Eilenberg-MacLane complex of type $(G_{\Gamma},1)$, that is, $K_{\Gamma}$ is a path-connected space with fundamental group $G_\Gamma$ and trivial higher homotopy groups. 

\medskip
The relevance of Matula's Theorem~\ref{randomcat} for Theorem~\ref{randomtopTCs} can already be seen from Proposition~\ref{ulbTCn}: By definition, $K_\Gamma$ comes equipped with a CW structure having a $d$-dimensional cell for each complete subgraph of $\Gamma$ with $d$ vertices. In particular
\begin{equation}\label{hdimvsclique}
\hdim(K_\Gamma)\leq C(\Gamma)
\end{equation}
and, consequently,
\begin{equation}\label{easy-half}
\mathrm{Prob}(\TC_s\leq s\lfloor z+\epsilon\rfloor)\geq\mathrm{Prob}(C\leq\lfloor z+\epsilon\rfloor).
\end{equation}
As $n\to\infty$, the left hand side in~(\ref{easy-half}) tends to 1 since the right hand side does too in view of Matula's theorem. This gives half of Theorem~\ref{randomtopTCs}. Before proving the other half, namely the equality
\begin{equation}\label{otherhalf}
\lim_{n\to\infty}\mathrm{Prob}\left(s\lfloor z-\epsilon\rfloor\leq\TC_s\rule{0mm}{4mm}\right)=1,
\end{equation}
we pause to remark that~(\ref{hdimvsclique}) is in fact an equality, as follows easily from the following description of the cohomology ring of $K_\Gamma$. Recall that $H^*(T^n)$ (with any coefficients) is an exterior algebra $E(x_1,\ldots,x_n)$ where each generator has degree 1. Then:

\begin{proposition}[{See~\cite[Theorem 2.35]{MR2673742}}]\label{desccoho}
The inclusion $K_\Gamma\hookrightarrow T^n$ induces in cohomology an epimorphism $E(x_1,\ldots,x_n)\to H^*(K_\Gamma)$ whose kernel is additively generated by the monomials $\prod_{i\in J}x_i$ for which $e_J$ s not a cell of $K_\Gamma$. In particular, the cohomological dimension of $K_\Gamma$, $\cd(K_\Gamma)$, agrees with the Lusternik-Schnirelmann category of $K_\Gamma$, $\cat(\Gamma)$. Indeed, $$C(\Gamma)=\cd(K_\Gamma)\leq\cat(K_\Gamma)\leq\hdim(K_\Gamma)\leq C(\Gamma).$$
\end{proposition}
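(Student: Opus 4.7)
The plan is to exploit the fact that $K_\Gamma$ sits inside $T^n$ as a CW subcomplex determined by $\Delta_\Gamma$. First I would compute the cellular cochain complex of $T^n$: since $T^n$ is a product of circles with their minimal cell structure, the K\"unneth formula (compatible with cup products) gives $C^*(T^n)\cong E(x_1,\ldots,x_n)$, where $x_i$ is dual to the 1-cell $e^1$ in the $i$-th factor, and all coboundary operators vanish. Because $K_\Gamma$ is a CW subcomplex of $T^n$, the restriction of cellular cochains is a surjection onto $C^*(K_\Gamma)$, and its kernel is (additively) spanned by the cochains dual to the cells $e_J$ of $T^n$ that have been deleted, i.e.\ the monomials $\prod_{i\in J}x_i$ for $J\notin\Delta_\Gamma$. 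This proves the first assertion (and is of course just the polyhedral-product instance of the Buchstaber--Panov computation already cited).

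From this description I would read off that $\cd(K_\Gamma)=C(\Gamma)$. Picking a clique $J^\star\subseteq[n]$ of maximum cardinality, the class $\prod_{i\in J^\star}x_i$ is a nonzero element of $H^{C(\Gamma)}(K_\Gamma)$, while any monomial of strictly higher degree would have to use a vertex subset that is not a simplex of $\Delta_\Gamma$, hence vanish. In particular, this top class is literally the cup product of the $C(\Gamma)$ degree-one classes $x_i$ with $i\in J^\star$, so the cuplength of $K_\Gamma$ is at least $C(\Gamma)$. The standard cuplength lower bound for Lusternik--Schnirelmann category then yields $\cat(K_\Gamma)\geq C(\Gamma)$.

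For the upper bounds I would chain two well-known facts: for any path-connected space one has $\cat(X)\leq\hdim(X)$; and for $K_\Gamma$ the given cellular decomposition has one cell for each simplex of $\Delta_\Gamma$, with the dimension of the cell equal to the number of vertices of the corresponding clique, so $\hdim(K_\Gamma)\leq\dim(K_\Gamma)=C(\Gamma)$. Combining everything gives
$$
C(\Gamma)\;=\;\cd(K_\Gamma)\;\leq\;\cat(K_\Gamma)\;\leq\;\hdim(K_\Gamma)\;\leq\;\dim(K_\Gamma)\;=\;C(\Gamma),
$$
so that all intermediate quantities are forced to coincide with $C(\Gamma)$.

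The only nontrivial ingredient is the cohomology-ring computation of the first step; once that is in hand the remainder is a routine chain of standard inequalities. Since the ring description is classical, the cleanest write-up probably just quotes \cite[Theorem 2.35]{MR2673742} for the first statement and then carries out the two-line cuplength-and-dimension argument above to conclude.
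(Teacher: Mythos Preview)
Your argument is correct. Note that the paper does not actually supply a proof of this proposition: the cohomology-ring description is quoted from \cite[Theorem~2.35]{MR2673742}, and the chain of inequalities is stated without justification, being standard. Your proposal fills in exactly the details one would expect---in particular, you correctly observe that the inequality $\cd(K_\Gamma)\leq\cat(K_\Gamma)$ is not automatic for arbitrary spaces but holds here because the top nonzero cohomology class is a cup product of degree-one classes, so that cuplength equals $\cd$. This is the natural elaboration of what the paper leaves implicit.
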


Proposition~\ref{desccoho} is used in~\cite{CosFar} in order to interpret the case $k=m=1$ of Proposition~\ref{mainine} as an indication that a significant amount of cohomology in $K_\Gamma$ is concentrated in dimension $\lfloor z-\epsilon\rfloor\,$, i.e.~within one of the top dimension of $K_\Gamma$. We leave to the reader the easy task of checking that the full form of Proposition~\ref{mainine}\hspace{.3mm}---our key technical input in Section~\ref{seccliqes}---asserts, more generally, that the expected number of $s$-th multi-cliques of size $\lfloor z-\epsilon\rfloor$ grows faster than any polynomial function on $\lfloor z-\epsilon\rfloor$. In topological terms, this implies that a significant amount of homology in any cartesian power $K_\Gamma^s$ is concentrated within $s$ units of the top dimension. The latter assertion should be compared to Corollary~\ref{neighborhood} below.

\medskip
We now explain how~(\ref{otherhalf}) follows from our previous work. By Proposition~\ref{desccoho}, the case $s=1$ reduces to Matula's Theorem~\ref{randomcat}. On the other hand, the case $s\geq2$ follows at once from Theorem~\ref{randomTCs} and the following result:

\begin{theo}[{\cite[Theorem~2.7]{GGY}}]\label{determdescri}For $s\geq2$,
$$
\TC_s(K_\Gamma)=\max\left\{\rule{0mm}{7mm}\sum_{\ell=1}^{s}\left|\rule{0mm}{3.3mm}V_\ell\right|-\left| \bigcap_{\ell=1}^{s}V_\ell\right| \colon \mbox{each } V_i\subseteq[n]\mbox{ yields a complete induced subgraph } \Gamma_{|V_i}\right\}.
$$
\end{theo}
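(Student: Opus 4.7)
The strategy is to establish matching inequalities: $\TC_s(K_\Gamma)$ is at least the indicated maximum via a non-vanishing product of zero-divisors in $H^*(K_\Gamma^s)$, and at most that maximum via an explicit combinatorial motion planner built from the polyhedral-product cell structure of $K_\Gamma$.

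For the lower bound, I would apply the standard zero-divisor estimate $\TC_s(X)\geq\zcl_s(X)$, where $\zcl_s(X)$ is the maximum length of a non-vanishing product of elements of $\ker\Delta_s^*\subseteq H^*(X^s)$. Fix clique-spanning subsets $V_1,\ldots,V_s\subseteq[n]$ with $V_0=\bigcap_\ell V_\ell$, and work in $H^*(K_\Gamma^s)\cong H^*(K_\Gamma)^{\otimes s}$ as described by Proposition~\ref{desccoho}. Let $x_i^{(\ell)}$ denote the pull-back of $x_i\in H^1(K_\Gamma)$ along the $\ell$-th projection, so that $\bar{x}_i^{(\ell)}:=x_i^{(\ell)}-x_i^{(1)}$ ($\ell\geq 2$) lies in $\ker\Delta_s^*$. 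For each $i\in V_1\setminus V_0$, pick $\ell_i\in\{2,\ldots,s\}$ with $i\notin V_{\ell_i}$; such $\ell_i$ exists since $i\notin V_0$. I would then form the product
$$P=\prod_{i\in V_0}\prod_{\ell=2}^{s}\bar{x}_i^{(\ell)}\cdot\prod_{\ell=2}^{s}\prod_{i\in V_\ell\setminus V_0}\bar{x}_i^{(\ell)}\cdot\prod_{i\in V_1\setminus V_0}\bar{x}_i^{(\ell_i)},$$
whose total length is $(s-1)|V_0|+\sum_{\ell\geq 2}|V_\ell\setminus V_0|+|V_1\setminus V_0|=\sum_\ell|V_\ell|-|V_0|$. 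Expanding each $\bar{x}_i^{(\ell)}=x_i^{(\ell)}-x_i^{(1)}$, the ``primary'' monomial obtained by choosing $-x_i^{(1)}$ only in the third block and $+x_i^{(\ell)}$ elsewhere equals $\pm\prod_{i\in V_1\setminus V_0}x_i^{(1)}\otimes\prod_{i\in V_2}x_i^{(2)}\otimes\cdots\otimes\prod_{i\in V_s}x_i^{(s)}$, and is non-zero by Proposition~\ref{desccoho} since each $V_\ell$ is a clique. A case-by-case sign-tracking argument---using $(x_i^{(\ell)})^2=0$ to kill duplicate contributions to a single tensor factor, and the defining property $i\notin V_{\ell_i}$ to ensure that any alternative sign choice in the third block places $x_i$ in a tensor slot where the primary monomial has none---shows this primary monomial is attained in exactly one way by the expansion. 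Hence $P\neq 0$, giving $\TC_s(K_\Gamma)\geq\sum|V_\ell|-|V_0|$; maximizing over admissible tuples establishes one inequality.

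For the upper bound, every product cell of $K_\Gamma^s$ is indexed by an $s$-tuple of cliques $(W_1,\ldots,W_s)$ and carries a ``local complexity'' $\sum|W_\ell|-|\bigcap W_\ell|$. The plan is to build, around each such cell, an open neighborhood in $K_\Gamma^s$ together with a continuous local section of $e_s$, and then merge all cells sharing the same local-complexity value into a single sectioned open domain by a coherent Farber-style prism construction adapted to the polyhedral product. The resulting cover of $K_\Gamma^s$ would have $\max(\sum|W_\ell|-|\bigcap W_\ell|)+1$ local domains, yielding the desired upper bound on $\TC_s(K_\Gamma)$.

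The main obstacle is the upper-bound step: orchestrating the merges so that distinct cells of equal local complexity assemble into one open set supporting a single continuous section---without breaking continuity along cell boundaries---is the combinatorial-topological heart of the argument and depends essentially on the polyhedral-product structure of $K_\Gamma$. By contrast, the lower-bound zero-divisor computation, although bookkeeping-intensive, proceeds directly from the exterior-algebra presentation of $H^*(K_\Gamma)$ recorded in Proposition~\ref{desccoho}.
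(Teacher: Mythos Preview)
The paper does not prove this statement. Theorem~\ref{determdescri} is quoted verbatim as \cite[Theorem~2.7]{GGY} and is used as a black box: the authors apply it (together with Theorem~\ref{randomTCs}) to deduce~(\ref{otherhalf}), and that is the entire extent of its appearance. There is therefore no ``paper's own proof'' to compare your proposal against.

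That said, your outline is a reasonable sketch of how such a result is typically established, and it is broadly in the spirit of the arguments in~\cite{GGY}: a zero-divisor cup-length computation for the lower bound, and a combinatorial cover of $K_\Gamma^s$ for the upper bound. Two remarks. First, your uniqueness claim for the ``primary monomial'' in the lower-bound step is the genuinely delicate point; the fact that an index $i\in V_1\setminus V_0$ may also lie in some $V_\ell\setminus V_0$ with $\ell\geq2$ means several factors $\bar x_i^{(\cdot)}$ can contribute an $x_i^{(1)}$, and one must check carefully that every alternative choice alters some other tensor slot. Your parenthetical hints are in the right direction, but this is exactly where a full proof has to do real work. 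Second, you correctly identify the upper bound as the hard half and give only a plan; the construction in~\cite{GGY} is indeed of this flavor but requires a careful inductive or filtration argument on the polyhedral-product structure that is not visible from your description.
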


We close the paper by noticing that the $s$-th higher topological complexity of $K_\Gamma$ is asymptotically almost surely within an $s$ neighborhood of the upper bound given in Proposition~\ref{ulbTCn}. Indeed, by Matula's Theorem~\ref{randomcat} (with $\epsilon<1/2$), the number $r=\lfloor z-\epsilon\rfloor$ in the previous section satisfies $r\geq\hdim-1$ asymptotically almost surely. So Theorems~\ref{randomTCs} and~\ref{determdescri} yield:

\begin{corollary}\label{neighborhood}
$\lim_{n\to\infty}\mathrm{Prob}\left(s(\hdim-1)\leq\TC_s \leq s\hdim\rule{0mm}{4mm}\right)=1.$
\end{corollary}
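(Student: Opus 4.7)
The plan is to combine the upper bound from Proposition~\ref{ulbTCn}, the combinatorial formula of Theorem~\ref{determdescri}, and the existence of large multi-cliques coming from Theorem~\ref{randomTCs}, with Matula's Theorem~\ref{randomcat} used to compare the multi-clique size $r=\lfloor z-\epsilon\rfloor$ with $\hdim(K_\Gamma)$. I would fix $\epsilon\in(0,1/2)$ once and for all, and carry out the argument by intersecting three high-probability events.

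First, I would dispatch the upper bound $\TC_s(K_\Gamma)\leq s\hdim(K_\Gamma)$. Since $K_\Gamma$ is path-connected, Proposition~\ref{ulbTCn} applies with $c=0$, and this bound holds deterministically for every graph $\Gamma$, so it contributes probability $1$ to the event in the statement.

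For the lower bound, I would argue as follows. By Theorem~\ref{randomTCs}, with probability tending to $1$, the random graph $\Gamma$ admits an $s$-th multi-clique $(V_1,\dots,V_s)$ of size $r=\lfloor z-\epsilon\rfloor$. Because $s\geq 2$ and the $V_\ell$ are pairwise disjoint, $\bigcap_{\ell=1}^s V_\ell=\varnothing$, so Theorem~\ref{determdescri} yields
\[
\TC_s(K_\Gamma)\;\geq\;\sum_{\ell=1}^s|V_\ell|-\Bigl|\bigcap_{\ell=1}^s V_\ell\Bigr|\;=\;sr.
\]
(For $s=1$, Proposition~\ref{desccoho} gives the stronger equality $\TC_1(K_\Gamma)=\hdim(K_\Gamma)$, so the corollary is trivial in that case.) Next, Matula's Theorem~\ref{randomcat} applied with this same $\epsilon<1/2$ gives, with probability tending to $1$, that $C(\Gamma)\leq\lfloor z+\epsilon\rfloor\leq\lfloor z-\epsilon\rfloor+1=r+1$. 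Combined with $\hdim(K_\Gamma)=C(\Gamma)$ from Proposition~\ref{desccoho}, this yields $\hdim(K_\Gamma)-1\leq r$, and therefore
\[
\TC_s(K_\Gamma)\;\geq\;sr\;\geq\;s(\hdim(K_\Gamma)-1).
\]

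Finally, I would intersect the two high-probability events (existence of the $s$-th multi-clique of size $r$, and the Matula upper bound $C\leq\lfloor z+\epsilon\rfloor$). Each has probability tending to $1$ as $n\to\infty$, so their intersection does too, and on this intersection both $s(\hdim-1)\leq\TC_s$ and $\TC_s\leq s\hdim$ hold. There is no substantive obstacle here: the whole corollary is a packaging of results already in place. The only mild subtlety worth flagging is the choice $\epsilon<1/2$, which is what forces $\lfloor z+\epsilon\rfloor-\lfloor z-\epsilon\rfloor\leq 1$ and thereby turns the Matula envelope into the precise estimate $\hdim-1\leq r$ needed to reach the $s$-neighborhood of $s\hdim$.
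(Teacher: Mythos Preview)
Your proof is correct and follows essentially the same approach as the paper: fix $\epsilon<1/2$, use Proposition~\ref{ulbTCn} for the deterministic upper bound, combine Theorems~\ref{randomTCs} and~\ref{determdescri} to get $\TC_s\geq sr$ asymptotically almost surely, and use Matula's Theorem~\ref{randomcat} together with $\hdim(K_\Gamma)=C(\Gamma)$ to conclude $r\geq\hdim-1$. The paper's argument is terser but identical in substance; your explicit treatment of the $s=1$ case and the justification of $\lfloor z+\epsilon\rfloor\leq\lfloor z-\epsilon\rfloor+1$ simply make the details more visible.
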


%\bibliographystyle{plain}
%\bibliography{Artin}

\bigskip\sc
Departamento de Matem\'aticas

Centro de Investigaci\'on y de Estudios Avanzados del IPN

Av.~IPN 2508, Zacatenco, M\'exico City 07000, M\'exico

{\tt jesus@math.cinvestav.mx}

{\tt
bgutierrez@math.cinvestav.mx}

{\tt rmas@math.cinvestav.mx}

\end{document}